\def\Bbb{\mathbb}
\def\bR{{\Bbb R}}
\def\la{{\lambda}}
\def\bpm{\begin{pmatrix}}
\def\epm{\end{pmatrix}}
\def\bee{\begin{enumerate}}
\def\ee{\end{enumerate}}
\newtheorem{thm}{Theorem}[section]
\newtheorem{prop}[thm]{Proposition}
\newtheorem{cor}[thm]{Corollary}
\newtheorem{lemma}[thm]{Lemma}
\newtheorem{remark}[thm]{Remark}
\def\keywords{\xdef\@thefnmark{}\@footnotetext}
\begin{document}


\title[Estimates for oscillatory integrals] {Estimates for oscillatory integrals with  phase having  $D$ type singularities}
\author[Akramov]{Ibrokhimbek Akramov}
\address{Silk Road International University of Tourism and Cultural Heritage,
University Boulevard 17, 140104, Samarkand,  Uzbekistan} \email{{\tt
i.akramov1@gmail.com}}
\medskip
\author[Ikromov]{Isroil A. Ikromov}
\address{Institute of Mathematics named after V.I. Romanovsky,
University Boulevard 15, 140104, Samarkand,  Uzbekistan} \email{{\tt
i.ikromov@mathinst.uz}}

\maketitle


\keywords{2000 \emph{Mathematical Subject Classification.}
42B10, 42B20,  42B37}\\
\keywords{\emph{Key words and phrases.}
  Convolution operator, hypersurface, oscillatory integral, singularity}

\begin{abstract} {
In this paper, we consider estimates for the two-dimensional oscillatory integrals.
The phase function of the oscillatory integrals is the linear  perturbation of a function having $D$ type singularities.
We consider estimates for the oscillatory integrals in terms of the Randol's type maximal functions.
We obtain a sharp $L^p_{loc}$ estimates  for the Randol's maximal functions. Moreover,
we investigate the sharp exponent $p$ depending on whether, the phase function  has  linearly adapted  coordinates system or not.
}

 \end{abstract}
\maketitle



\thispagestyle{empty}

\setcounter{equation}{0}
\section{Introduction}\label{introduction}

It is well known that  many problems of harmonic analysis and mathematical physics are reduced to problems of investigation of the oscillatory integrals given by (see \cite{IMacta},  \cite{BIM22},  \cite{sug98}, \cite{stein-book}):
\begin{equation}\label{OI}
I(\lambda, s):=\int_{\mathbb{R}^\nu} e^{i\lambda (\phi(x)-sx)}a(x, s)dx,
\end{equation}
where $a\in C_0^\infty(V\times U)$ is  so-called an amplitude function, which is a smooth function with  support in $V\times U$,  here $V\times U\subset\mathbb{R}^\nu\times \mathbb{R}^\nu$ is a neighborhood of the origin, and $\phi$ is a smooth real-valued function satisfying $\phi(0)=0$ and  $\nabla\phi(0)=0$ and also $\lambda\in \mathbb{R}$ is a real parameter. The term $sx$ means the inner product of the vectors $s, x\in \mathbb{R}^\nu$.

 Assume $\phi$ is a real analytic function at the origin and $s=0$.  Then an asymptotic expansion of the integral $I(\lambda, 0)$ as $\lambda\to +\infty$ was investigated by many authors under the condition that an amplitude function is concentrated in a sufficiently small neighborhood  $V$  of the origin (see for example \cite{agv2}, \cite{Atiyah},  \cite{BernsteinG},  \cite{fedoryuk},  \cite{varchenko} and so on).

In this paper, we consider behavior of the oscillatory integrals for the case when $\nu=2$ and $\phi$ has singularity of type $D_{n+1}^\pm (3\le n\le\infty)$ at the origin. By a singularity point of a smooth function we mean the critical point of the function (see \cite{agv1}).

The following results are obtained by J.J. Duistermaat \cite{duistermaat}: If $\phi $ has singularity of type $D_{n+1}^\pm$ at the origin, then there exists a neighborhood $V\times U\subset\mathbb{R}^2\times \mathbb{R}^2$ of zero such that for any $a\in C_0^\infty(V\times U)$ the following estimate
 \begin{eqnarray*}
|I(\lambda, s)|\le \frac{C(a)}{|\lambda|^\frac{n+1}{2n}}
\end{eqnarray*}
holds true.

Note that the last estimate uniform with respect to the parameters $s$ e.g. the constant $C(a)$ in the  bound does not depend on $s$.
Actually, Duistermaat obtained asymptotic expansion for  the oscillatory integrals, via  special (generalized Airy type) functions defined by oscillatory integrals, from which one can obtain the above estimate.

Analogical estimates for the oscillatory integrals with an analytic phase function can be derived from more general results proved by Karpushkin (see \cite{karpushkin} and  also see \cite{IM-uniform} for the corresponding estimates  for oscillatory integrals with smooth phase function).

 In this paper,  we consider estimates for the oscillatory integrals depending on the large parameter $\lambda$ and the parameters $s$.
 Such kind of estimates are investigated in many papers (see \cite{popov} and references in that paper).

Our estimates can be expressed in terms of the Randol's type maximal functions.

The paper organized as follows, in the next section \ref{newton} we give notions related to Newton polyhedron of the phase function at the critical point.
 We give  a normal form of the function in the linearly adapted coordinates system in the section \ref{normal}.  In the next section \ref{mainres} we formulate our main results. We give a proof of the main result for the case when there is a linearly adapted coordinates system in the section \ref{LA}. Further,  we obtain an upper   bounds for the critical exponent of summation of the Randol's maximal function for the exceptional case in the section \ref{excep}. Finally,  in the last section \ref{NLA} we obtain estimates for the Randol's maximal function for the case when linearly adapted coordinates system does not exist for the  phase function.

{\bf Conventions:}  Throughout this article, we shall use the variable constant notation,
i.e., many constants appearing in the course of our arguments, often denoted by
$c, C, \varepsilon, \delta$; will typically have different values at different lines. Moreover, we shall use symbols
such as $\sim, \lesssim;$ or $<<$ in order to avoid writing down constants, as explained in \cite{IMmon} (
Chapter 1). By $\chi_0$ we shall denote a non-negative smooth,  cut-off function on $\mathbb{R}^\nu$ with typically
small compact support which is identically $1$ on a small neighborhood of the origin.
For a set $A\subset \mathbb{R}^\nu$, the function $\chi_A$ denotes the indicator function of the set $A$.

\section{Newton polyhedrons and adapted coordinates systems}\label{newton}

In order, to formulate our main results we need normal form of the function $\phi$ with respect to linear change of variables.

Further, we use the notation
\begin{eqnarray*}
\phi(x)\thickapprox\sum_{k=2}^\infty \phi_k(x_1, x_2),
\end{eqnarray*}
for the Taylor development, with homogeneous polynomial $\phi_k$ of degree $k$.
Also, $\mathfrak{n}(\phi_k)$ means the maximal order of roots of the function $\phi_k$ on the unit circle centered at the origin.

Behavior of the two-dimensional oscillatory integrals can be expressed in terms of
height and linear height of smooth phase function \cite{varchenko} (also see \cite{IM-adapted}). The notions are defined in terms of Newton polyhedrons constructed at a critical point of the phase function.  Let $\phi$ be a smooth real-valued function defined in a neighborhood of the origin of $\mathbb{R}^2$ satisfying the conditions $\phi(0)=0$ and $\nabla\phi(0)=0$. Consider
the associated Taylor series
\begin{eqnarray*}
\phi(x) \thickapprox \sum_{|\alpha|=2}^\infty c_{\alpha} x^\alpha
\end{eqnarray*}
of $\phi$ centered at the origin. Where
\begin{eqnarray*}
\alpha:= (\alpha_1,  \alpha_2)\in  \mathbb{Z}^2_+\setminus\{(0, 0)\},\, \mathbb{Z}_+:=\mathbb{N}\cup \{0\},\, x^\alpha=x_1^{\alpha_1}x_2^{\alpha_2},\, |\alpha|:=\alpha_1+\alpha_2.
\end{eqnarray*}

The set
\begin{eqnarray*}
\mathcal{T}(\phi) := \{\alpha\in  \mathbb{Z}^2_+\setminus\{(0,  0)\}: c_{\alpha}:=\frac1{\alpha_1!\alpha_2!} \partial_1^{\alpha_1}\partial_2^{\alpha_2} \phi(0, 0)\neq0\}
\end{eqnarray*}
will be called the Taylor support of $\phi$ at the origin,
where and further, we   use the following  standard notation assuming $F$ being  a sufficiently smooth function:
\begin{eqnarray*}
\partial^\alpha F(x):=\partial_1^{\alpha_1} \partial_2^{\alpha_2}F(x):=\frac{\partial^{|\alpha|}F(x)}{\partial x_1^{\alpha_1}\partial x_2^{\alpha_2}}.
\end{eqnarray*}

The Newton polyhedron or polygon $\mathcal{N}(\phi)$ of
$\phi$ at the origin is defined to be the convex hull of the union of all the octants $\alpha +\mathbb{ R}^2$, with
$\alpha\in \mathcal{T}(\phi)$.  A Newton diagram $\mathcal{D}(\phi)$ is the union of  edges of the Newton polygon.
 We use coordinates $t := (t_1, t_2)$ in the space $\mathbb{R}^2\supset \mathcal{N}(\phi)$.
The Newton distance in the sense of Varchenko \cite{varchenko}, or shorter distance, $d = d(\phi)$ between
the Newton polyhedron and the origin is given by the coordinate d of the point $(d,  d)$ at which
the bi-sectrix $t_1 = t_2$ intersects the boundary of the Newton polyhedron. A principal face is the face of minimal dimension containing the point $(d, d)$.
Let $\gamma\in \mathcal{D}(\phi)$ be a face of the Newton polyhedron. Then the formal  power series (or finite sum in case $\gamma$ is a compact edge):
\begin{eqnarray*}
\phi_\gamma\thickapprox \sum_{\alpha\in \gamma} c_\alpha x^\alpha
\end{eqnarray*}
is called to be a part of Newton polyhedron corresponding to the face $\gamma$.

The part of Taylor series of the function $\phi$ corresponding to the principal face is called to be a principal part of the function.

 The height of a smooth
function $\phi$ is defined by \cite{varchenko}:
\begin{equation}\label{height}
h(\phi) := \sup \{d_y\},
\end{equation}
where the "$\sup$" is taken over all local coordinates system $y$ at the origin (it means the smooth
change of variables in a neighborhood of the origin which preserves the origin), and where $d_y$ is the
distance between the Newton polyhedron and the origin in the coordinates $y$.

The coordinates system $y$ is called to be adapted to $\phi$ if $h(\phi)=d_y$. The existence of adapted coordinates system was proved by Varchenko A.N. in the paper \cite{varchenko}  for analytic functions of two variables (also see \cite{IM-adapted}, where  analogical results are obtained  for smooth functions).

If we restrict ourselves with linear change of variables, e.g.
\begin{eqnarray*}
h_{lin}(\phi) := \sup_{GL} \{d_y\},
\end{eqnarray*}
where $GL:=GL(\mathbb{R}^2)$ is the group of all linear transforms of $\mathbb{R}^2$,   then we came to a notion a linear height of a function $\phi$ \cite{IMmon}.

Surely, $h_{lin}(\phi)\le h(\phi)$ for any smooth function $\phi$ with $\phi(0)=0$ and $\nabla\phi(0)=0$. If $h_{lin}(\phi)= h(\phi)$ then we say that the coordinates system $x$ is linearly adapted (LA) to $\phi$. Otherwise, if $h_{lin}(\phi)< h(\phi)$
then the coordinates system is not linearly adapted (NLA) to $\phi$.

\section{On the normal form of the function}\label{normal}

Further, we use notation
\begin{eqnarray*}
Hess\phi(x):=
\begin{pmatrix}
\partial_1^2\phi(x)& \partial_1\partial_2\phi(x)\\
\partial_2\partial_1\phi(x)& \partial_2^2\phi(x)
  \end{pmatrix}
\end{eqnarray*}
for the Hessian matrix of $\phi$.

 In this paper, we assume that $\partial^\alpha\phi(0)=0$   for any $\alpha\in \mathbb{Z}_+^2$ with $|\alpha|\le2$  e.g. the singularity of the function $\phi$ has co-rank two or equivalently rank zero. In general, rank (co-rank) of the Hessian matrix at the critical point is called to be a rank (co-rank) of singularity (see \cite{agv1}).

\begin{prop}\label{normform}
Assume that
$\partial_1^{\alpha_1}\partial_2^{\alpha_2}\phi(0, 0)=0$ for any multi-index $\alpha:=(\alpha_1, \alpha_2)\in \mathbb{Z}_+^2$ with $|\alpha|:=\alpha_1+\alpha_2\le2$. Then the following statements hold:\\
 If $\phi_3$,  which is the homogeneous part of degree $3$ of the Taylor polynomial of $\phi$,  satisfies the condition
$\mathfrak{n}(\phi_3) < 3$, then, $\phi$ after possible linear change of variables can be written in the following form on
a sufficiently small neighborhood of the origin:
\begin{equation}\label{(1.2.2)}
    \phi(x_1,x_2)=b(x_1,x_2)(x_2-\psi(x_1))^2+b_0(x_1),
\end{equation}
where $b,  b_0$ are smooth functions, and $b(0, 0) =0,\, \partial_1 b(0, 0)\neq0, \,\partial_2 b(0, 0)=0 $  and also $\psi(x_1) = x_1^m\omega(x_1)$
 with  $m\ge2$ and $\omega(0) \not= 0$ unless $\psi$  is a flat
function.

Moreover, either\\
(i) $b_0$ is flat, (singularity of type $D_\infty$), and $h(\phi) = 2$,
or\\
(ii) $b_0(x_1) = x_1^n\beta(x_1)$ with $n\ge 3$, where $\beta(0) \not= 0$ (singularity of type $D_{n+1}$) and $h(\phi) = 2n/(n+1)$.
\end{prop}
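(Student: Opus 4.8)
The plan is to start from the classical normal form for $D$-type singularities and bootstrap it to the stated form. Since $\phi$ has corank two at the origin, the cubic part $\phi_3$ is a nonzero homogeneous polynomial of degree $3$ in two variables (it is nonzero because otherwise the singularity would not be of finite $D$ type with $n\ge 3$, or the condition $\mathfrak{n}(\phi_3)<3$ forces it to be nontrivial anyway). The condition $\mathfrak{n}(\phi_3)<3$ says $\phi_3$ has no root of multiplicity $3$ on the unit circle, i.e. $\phi_3$ is not a perfect cube of a linear form. Over $\mathbb{R}$, a nonzero binary cubic that is not a perfect cube factors either as (linear)$\cdot$(nondegenerate quadratic) — three distinct complex roots, one real — or as (linear)$^2\cdot$(linear) with the two linear factors independent. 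After a linear change of variables I would normalize so that one simple real linear factor becomes $x_1$, i.e. arrange $\phi_3 = x_1 q(x_1,x_2)$ where $q$ is a quadratic form with $\partial_2^2 q(0)\ne 0$; rescaling $x_2$ we may take the $x_2^2$ coefficient of $\phi_3$ equal to, say, $1$, so $\phi_3(x_1,x_2) = x_1 x_2^2 + (\text{terms with higher power of } x_1)$. This is exactly the germ-of-$D$ situation.

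Next I would produce the factorization $(x_2-\psi(x_1))^2$. The idea is to complete the square in $x_2$ using the implicit function theorem / Weierstrass-type argument. Write $\partial_2^2\phi(0,0)=0$ but $\partial_1\partial_2^2\phi(0,0)\ne 0$ after the normalization above (this is the coefficient of $x_1x_2^2$ in $\phi_3$, which is $2$ up to scaling). Hence $\partial_2^2\phi(x_1,x_2)$ is a smooth function vanishing at the origin but with nonvanishing $x_1$-derivative there; consider the equation $\partial_2\phi(x_1,x_2)=0$. I would apply the Malgrange/Weierstrass preparation theorem (or, since we are in the smooth category, the division theorem of Malgrange as used in \cite{IM-adapted}) to $\partial_2\phi$ viewed as a function of $x_2$ with parameter $x_1$: because $\partial_2^2\phi$ has a simple zero structure we can solve $\partial_2\phi(x_1,x_2)=0$ for a unique smooth branch $x_2=\psi(x_1)$ with $\psi(0)=0$; then $\phi$ restricted along $x_2\mapsto$ (this branch) gives $b_0(x_1):=\phi(x_1,\psi(x_1))$, and Taylor expanding $\phi$ in $x_2$ about $x_2=\psi(x_1)$ yields
\begin{equation*}
\phi(x_1,x_2)=b_0(x_1)+b(x_1,x_2)\,(x_2-\psi(x_1))^2,
\end{equation*}
with $b(x_1,x_2)=\int_0^1(1-t)\,\partial_2^2\phi\bigl(x_1,\psi(x_1)+t(x_2-\psi(x_1))\bigr)\,dt$. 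From this integral formula one reads off $b(0,0)=\tfrac12\partial_2^2\phi(0,0)=0$, $\partial_1 b(0,0)=\tfrac12\partial_1\partial_2^2\phi(0,0)\ne 0$, and $\partial_2 b(0,0)=\tfrac16\partial_2^3\phi(0,0)$, which can be arranged to vanish by a further preliminary linear shear $x_2\mapsto x_2+c x_1$ absorbing the $x_2^3$ and/or adjusting $\psi$; one checks the cubic $\phi_3=x_1x_2^2$ already has no $x_2^3$ term, so $\partial_2 b(0,0)=0$ automatically. The claim $\psi(x_1)=x_1^m\omega(x_1)$, $m\ge 2$, $\omega(0)\ne 0$ (unless $\psi$ flat) is then just the statement that a smooth function vanishing at $0$ either is flat or has a well-defined order of vanishing $m$; and $m\ge 2$ because $\psi'(0)$ can be killed by the shear $x_2\mapsto x_2-\psi'(0)x_1$ — equivalently, in adapted-type coordinates the curve $x_2=\psi(x_1)$ is tangent to the $x_1$-axis.

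Finally I would compute the height in the two cases by exhibiting the Newton polyhedron in the new coordinates (which, one argues, are already adapted — or become adapted after the shear, because the profile of the single Puiseux/smooth root is fully straightened). In case (i), $b_0$ flat means $\phi - b(x_2-\psi)^2$ is flat, the Taylor support is concentrated on the line $t_2=2$ region generated by $b(x_1,x_2)(x_2-\psi(x_1))^2$; the leading monomial is $x_1x_2^2$ (from $\partial_1b(0,0)\ne 0$), so $\mathcal{N}(\phi)$ has vertex $(1,2)$ and the bisectrix meets the boundary at $(2,2)$, giving $d=h=2$; this is the $D_\infty$ case. In case (ii), $b_0(x_1)=x_1^n\beta(x_1)$ with $n\ge 3$, $\beta(0)\ne 0$ adds the vertex $(n,0)$; the Newton polygon is the convex hull of $(n,0)$, $(1,2)$ and their up-right octants, the relevant edge runs between these two vertices, and the bisectrix $t_1=t_2$ hits this edge at $t_1=t_2=2n/(n+1)$ (solve the line through $(n,0)$ and $(1,2)$: $\tfrac{2}{n-1}(n-t)=t$). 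One must also verify these coordinates are adapted, i.e. no further nonlinear change lowers the distance: this follows from Varchenko's / the Ikromov–Müller criterion since the principal part $x_1x_2^2$ has its root $x_2=0$ of multiplicity $2$ exactly matching the two copies of the smooth factor, and $b_0$ is already "as low as possible" being a pure power. I expect the genuinely delicate point to be this last adaptedness verification together with the Malgrange-division step in the smooth (non-analytic) category — that is where the hypothesis $\mathfrak{n}(\phi_3)<3$ (equivalently, the simple linear factor $x_1$) is essential, since a triple root would instead force the much more degenerate $E$-type behavior and a genuinely two-branch analysis. The Newton-polygon arithmetic and the signs/constants in the integral formula for $b$ are routine.
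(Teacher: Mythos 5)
The paper offers no proof of Proposition~\ref{normform}; it simply cites \cite{IMmon}, so there is no in-text argument to compare against. Your overall plan — normalize $\phi_3$ so a simple real linear factor becomes $x_1$, extract a curve $x_2=\psi(x_1)$ along which $\partial_2\phi$ vanishes, define $b$ via the integral Taylor remainder, and read the height off the Newton polygon in the adapted coordinates $\tilde x_2=x_2-\psi(x_1)$ — is the right one and is essentially the route taken in \cite{IMmon} and \cite{IM-adapted}. Your formula for $b$, the computations $b(0,0)=\tfrac12\partial_2^2\phi(0,0)=0$, $\partial_1 b(0,0)=\tfrac12\partial_1\partial_2^2\phi(0,0)\ne0$, $\partial_2 b(0,0)=\tfrac16\partial_2^3\phi(0,0)=0$, and the Newton-polygon arithmetic $d=2n/(n+1)$ are all correct.

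The genuine gap is the construction of the smooth branch $\psi$. You invoke the implicit function theorem or Malgrange preparation because ``$\partial_2^2\phi$ has a simple zero structure'', but neither tool applies as stated: the IFT for $\partial_2\phi$ in the $x_2$-variable needs $\partial_2^2\phi(0,0)\ne0$, which is false here, and Malgrange/Weierstrass preparation of $\partial_2\phi$ in $x_2$ needs $\partial_2\phi(0,x_2)$ to vanish to finite order — this can fail outright, e.g.\ $\phi=x_1x_2^2\pm x_1^n$ has $\partial_2\phi(0,x_2)\equiv0$. Moreover the zero set of $\partial_2\phi$ near the origin has \emph{two} branches crossing transversally, so ``a unique smooth branch'' is wrong. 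The argument that actually works: after the linear normalization $\phi_3=x_1x_2^2+cx_1^3$, the function $F:=\partial_2\phi$ has $F(0)=0$, $\nabla F(0)=0$, and Hessian entries $\partial_1^2\partial_2\phi(0,0)=0$, $\partial_1\partial_2^2\phi(0,0)=2\ne0$, $\partial_2^3\phi(0,0)=0$, hence $\det \Hess F(0)=-4\ne0$: the origin is a nondegenerate saddle of $F$. By the Morse lemma, $\{F=0\}$ consists locally of two smooth transversal curves through the origin, tangent to the null lines $\{x_1=0\}$ and $\{x_2=0\}$ of the quadratic part $2x_1x_2$; the one tangent to $\{x_2=0\}$ is a graph $x_2=\psi(x_1)$ with $\psi(0)=\psi'(0)=0$, which gives $m\ge2$. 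Once this is in place the Taylor-remainder decomposition and the rest of your argument go through.

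A smaller inaccuracy: in case~(ii) the principal part of $\phi$ in the coordinates $(\tilde x_1,\tilde x_2)=(x_1,x_2-\psi(x_1))$ is $c_1\tilde x_1\tilde x_2^2+c_2\tilde x_1^n$, not $x_1x_2^2$; there is never an interior monomial on the principal edge because $\phi=\tilde b\,\tilde x_2^2+b_0(\tilde x_1)$ has no Taylor coefficient at height $t_2=1$. Its real roots are therefore all simple, so $\mathfrak n(\phi_{\rm pr})\le1<d=2n/(n+1)$, and the Varchenko/Ikromov--M\"uller adaptedness criterion applies by simplicity of roots — not, as you wrote, because a root of ``multiplicity $2$ matches the two copies of the smooth factor''.
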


\begin{remark}
It is easy to show that the numbers $m, n$ are well-defined for arbitrary smooth function $\phi$ having $D_{n+1}$ type singularity (see  \cite{IMmon}). The coordinates are linearly adapted to $\phi$ if and only if $2m+1\ge n$. If $2m+1<n$ then $h_{lin}=(2m+1)/(2m)< 2n/(n+1)=h(\phi)$.  As we will see later, behavior of the integral essentially depends on whether the linearly adapted coordinates system exists for the function $\phi$  or not.
\end{remark}

Further, we shall work under the following  $R-$ condition: If  $\phi$ has singularity of type $D_\infty$ (e.g. $b_0$ is a flat function at the origin) then $b_0\equiv0$. Surely, if $\phi$ is a real analytic function then   the $R-$ condition is fulfilled automatically (compare with $R-$condition proposed in  \cite{IMmon} for more general smooth functions, which is defined in terms of the Newton polyhedrons).

\section{Formulation of the main results}\label{mainres}

We consider the oscillatory integral:
\begin{equation}\label{OI}
I(\lambda, s):=\int_{\mathbb{R}^2} e^{i\lambda (\phi(x_1, x_2)-s_1x_1-s_2x_2)}a(x_1, x_2, s_1, s_2)dx_1dx_2,
\end{equation}
where $a\in C^\infty_0(V\times U)$ (here $V\times U\subset \mathbb{R}^2\times \mathbb{R}^2 $ is a neighborhood  of the origin of  $\mathbb{R}^2\times \mathbb{R}^2 $) is a smooth function with compact support and $\phi$ is a smooth real-valued function with $\phi(0)=0, \, \nabla\phi(0)=0$.We are interested with behavior  of the integral for large values of the real parameter $\lambda$.

 As noted before,  the problem on estimates for oscillatory integrals, depending on the large parameter $\lambda$ and the parameters $s$, so-called combined estimates, is actual. Surely, it is more subtle problem than issue on uniform estimates (see \cite{popov} and references in it).
For this reason, we introduce the following Randol's (see \cite{randol})  type maximal function:
\begin{eqnarray*}
M_\gamma(s):=\sup_{\lambda>1} \lambda^\gamma |I(\lambda, s)|,
\end{eqnarray*}
where $\gamma$ is a real positive number.
Note that $M_\gamma$ is a Borel's measurable function. Due to the classical Sard's  Theorem \cite{sard}  for a.e.  $s\in \nabla\phi(V)$ the jacobian of the gradient map does not vanish and there exists  a
  subset of the set $\nabla\phi(V)$ with the positive Lebesgue  measure, on which the relation:
\begin{eqnarray*}
\overline{\lim_{\lambda\to +\infty}}\lambda |I(\lambda, s)|=c(s)\neq0,
\end{eqnarray*}
 holds,  whenever $a(0, 0)\neq0$. Therefore, for  the function $M_\gamma$ we have $M_\gamma=\infty$ on a set with the positive Lebesgue measure, whenever $\gamma>1$. Hence, the function $M_\gamma$ with $\gamma>1$ does not make any sense. On the other hand $M_\gamma$ is a bounded function for $\gamma=\frac{n+1}{2n}$.  Thats why further, we assume that $\frac{n+1}{2n}\le \gamma\le1$.
    It can be proved that if $\frac{n+1}{2n}<\gamma$ and  $a(0, 0)\neq0$,
then $M_\gamma\not\in L^\infty(\mathbb{R}^2)$.

We are interested with the problem:{\it Find the "maximal" $p$ such that $M_\gamma\in L_{loc}^p(\mathbb{R}^2)$}.

It turn out that  the "maximal"  number $p$, for which $M_\gamma\in L_{loc}^p(\mathbb{R}^2)$,  depends on the numbers $m,\, n $ defined by the Proposition
\ref{normform}.

The main our results are the followings:

\begin{thm}\label{adaptmain}
Suppose $\phi$ is a smooth function having $D_{n+1}(3\le n<\infty)$ type singularity at the origin  and $2m+1> n$, then exists a neighborhood $V\times U\subset \mathbb{R}^2\times \mathbb{R}^2$ of the origin such that for any $a\in C_0^\infty(V\times U)$ the following inclusion:
\begin{equation}\label{estran2}
M_{\gamma}\in L_{loc}^{\frac{3n-1}{2\gamma n-n-1}-0}(\mathbb{R}^2):=\bigcap_{1\le q<\frac{3n-1}{2\gamma n-n-1}}L_{loc}^{q}(\mathbb{R}^2)
\end{equation}
holds true. Moreover, if $a(0, 0)\neq0$ and $\frac{n+1}{2n}<\gamma\le1$  then
\begin{equation}\label{estsharp}
M_{\gamma}\not\in L_{loc}^{\frac{3n-1}{2\gamma n-n-1}}(\mathbb{R}^2).
\end{equation}
If $n=2m+1$ then the relation \eqref{estran2} holds true whenever $\frac{n+1}{2n}<\gamma\le \frac{3n-3}{3n-2}$. Moreover, if   $\frac{3n-3}{3n-2}<\gamma\le 1$  then
\begin{equation}\label{estsharpexp}
M_{\gamma}\in L_{loc}^{\frac{3}{4\gamma -3}-0}(\mathbb{R}^2).
\end{equation}

\end{thm}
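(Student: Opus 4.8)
The plan is to reduce the $L^p_{loc}$ membership of $M_\gamma$ to a pointwise bound on $|I(\lambda,s)|$ that is uniform in $\lambda$ but captures the correct decay rate of $M_\gamma(s)$ as $s$ approaches the singular locus of the gradient map $s=\nabla\phi(x)$. I would start from the normal form \eqref{(1.2.2)} given by Proposition \ref{normform}: writing $\phi(x_1,x_2)=b(x_1,x_2)(x_2-\psi(x_1))^2+b_0(x_1)$ with $b_0(x_1)=x_1^n\beta(x_1)$, I would change variables $x_2\mapsto x_2-\psi(x_1)$ (and absorb $b$ via a Morse-type substitution in $x_2$, legitimate since $\partial_1 b(0,0)\neq 0$ makes $b$ behave like $x_1$ near the origin away from $x_1=0$) to bring the phase to the model form $x_1 x_2^2 + x_1^n$ plus lower-order/flat corrections, up to the $R$-condition. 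The point $s$-dependence then splits: the $x_2$-integral is an Airy-type integral in the variable $x_1$ (scaling like $|x_1|^{-1/2}$ when $x_1\neq 0$, with the usual improvement near the fold $x_1=0$), and after performing it we are left with a one-dimensional oscillatory integral in $x_1$ whose phase is $x_1^n$ (a singularity of type $A_{n-1}$) perturbed linearly by $s_1$ and by an $s_2$-dependent term coming from the $x_2$-stationary phase. Here $2m+1>n$ (equivalently the coordinates being linearly adapted, so that $\psi$ is negligible relative to $b_0$) is exactly what guarantees the $x_1$-phase is genuinely $\sim x_1^n$ and the $\psi$ term does not interfere.

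The second step is the sublevel/Randol estimate in the remaining one variable. For the one-dimensional integral $\int e^{i\lambda(x_1^n - \sigma x_1)}\tilde a\,dx_1$ (with $\sigma$ a smooth function of $(s_1,s_2)$), van der Corput together with the classical Randol argument gives $\lambda^\gamma$ times this integral bounded by $C\,|\sigma|^{-\theta}$ for the appropriate $\theta=\theta(\gamma,n)$ on the range $\frac{n+1}{2n}<\gamma\le 1$, while the $x_1$-Airy factor contributes an extra $|x_1|^{-1/2}$-type loss that, after integrating in $x_1$, converts into a further negative power of the distance to the singular set. Combining these, I would obtain a pointwise bound of the shape $M_\gamma(s)\lesssim \dist(s,\Sigma)^{-\kappa}$ (with $\Sigma$ the relevant critical curve, locally a cusp), and then $M_\gamma\in L^q_{loc}$ precisely when $q\kappa<$ (codimension-weighted measure of the $\dist^{-\kappa}$ neighborhood), which after bookkeeping yields the exponent $\frac{3n-1}{2\gamma n-n-1}$. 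The boundary case $n=2m+1$ is handled by the same scheme, but now the $\psi$-term is of the same order as $b_0$, so the $x_1$-phase is $x_1^n$ plus a comparable $x_1^{2m+1}=x_1^n$ correction with a possibly different coefficient; the two regimes $\gamma\le\frac{3n-3}{3n-2}$ and $\gamma>\frac{3n-3}{3n-2}$ correspond to which of the two competing contributions dominates the local $L^q$ norm, producing the different exponent $\frac{3}{4\gamma-3}$ in \eqref{estsharpexp}.

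The third step is sharpness, i.e.\ \eqref{estsharp}: assuming $a(0,0)\neq 0$, I would exhibit the stationary-phase asymptotics of $I(\lambda,s)$ for $s$ on the regular part of $\nabla\phi(V)$ near the cusp $\Sigma$, where the two (or more) critical points coalesce. On the set where exactly two nondegenerate critical points survive at distance $\rho$ from $\Sigma$, one has $|I(\lambda,s)|\sim \lambda^{-1}$ with constant $\gtrsim$ a positive power of $\rho$ only in a $\lambda$-window, and optimizing over $\lambda>1$ forces $M_\gamma(s)\gtrsim \rho^{-\kappa}$ with the same $\kappa$; since $\rho^{-\kappa q}$ fails to be integrable exactly at $q=\frac{3n-1}{2\gamma n-n-1}$ against the cuspidal measure, $M_\gamma\notin L^{\frac{3n-1}{2\gamma n-n-1}}_{loc}$.

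The main obstacle I anticipate is the passage from the genuine smooth phase (with the $b$, $\psi$, and flat remainders present) to the clean model $x_1x_2^2+x_1^n$ while keeping every estimate \emph{uniform in $s$}: the Morse reduction in $x_2$ degenerates along $x_1=0$, so one must either localize dyadically in $|x_1|$ and track how the fold at $x_1=0$ improves the $x_2$-integral (this is where Duistermaat's Airy-type analysis enters), or work directly with the generalized Airy function $I(\lambda,s)$ and its known uniform asymptotics. Getting the bookkeeping of these dyadic pieces to sum to exactly the claimed exponents — and no better — is the delicate part, and it is also where the hypothesis $2m+1\ge n$ is used in an essential, non-cosmetic way.
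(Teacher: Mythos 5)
Your outline for the main case $2m+1>n$ is plausible in spirit but not the paper's route. The paper does not re-derive the upper bound by a two-step stationary phase in $(x_2,x_1)$; instead it cites the $\gamma=1$ inclusion $M_1\in L^{3+\frac2{n-1}-0}_{loc}$ from a companion paper and then interpolates against Duistermaat's uniform $|I(\lambda,s)|\lesssim \lambda^{-\frac{n+1}{2n}}$ bound (writing $\gamma=\alpha\cdot 1+(1-\alpha)\cdot\frac{n+1}{2n}$ converts the $L^{3+\frac2{n-1}}$ threshold directly into $\frac{3n-1}{2n\gamma-n-1}$). For sharpness the paper does not argue via coalescing nondegenerate critical points near the cuspidal variety; it performs the rescaling $x=\delta_{1/\lambda}(y)$, splits off a tail by an $s$-uniform integration-by-parts vector field, and then computes \emph{explicitly} that $\lambda^{\frac{n+1}{2n}}I(\lambda,s)\to\frac{\sqrt2}{n}\Gamma(\tfrac1{2n})e^{i\pi/(4n)}\neq0$ as $\lambda\to\infty$, $\sigma\to0$, giving $M_\gamma(s)\gtrsim\rho(s)^{-(\gamma-\frac{n+1}{2n})}$ with $\rho(s)=|s_1|^{\frac{n}{n-1}}+|s_2|^{\frac{2n}{n+1}}$, and then counting the $\rho$-sublevel measure. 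Both strategies can reach the same exponent, but yours still needs to justify the uniformity in $s$ of the $x_2$-stationary-phase remainder across the fold $x_1=0$, which you correctly flag as the delicate point.

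The genuine gap is in your treatment of the exceptional case $n=2m+1$. Your statement that "the $\psi$-term is of the same order as $b_0$, so the $x_1$-phase is $x_1^n$ plus a comparable $x_1^{2m+1}=x_1^n$ correction with a possibly different coefficient" cannot be right: $x_1^{2m+1}$ and $x_1^n$ are literally the same monomial, so no new singularity type can emerge from that bookkeeping, and "which contribution dominates" does not explain the exponent $\tfrac{3}{4\gamma-3}$. The actual mechanism (Section 7 of the paper) is that after rescaling to the unit "sphere" of $\rho$, the model phase becomes
\begin{eqnarray*}
\phi_1(y)=b_1(0,0)\,y_1y_2^2+\beta(0)\,y_1^n-\sigma_2^0\,\omega(0)\,y_1^m-\sigma_1^0y_1-\sigma_2^0y_2,
\end{eqnarray*}
and the term $-\sigma_2^0\omega(0)y_1^m$ is $O(1)$ precisely because $n=2m+1$. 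For suitable $\sigma^0$ on the unit $\rho$-sphere this can develop an $A_3$ critical point at some $y^0\neq0$, and it does so exactly when $(\beta(0),\omega(0),b_1(0,0))$ lies in the explicit set $\mathfrak{A}_3$; that $A_3$ coalescence gives decay $\lambda^{-3/4}$, hence a lower bound of the form $M_\gamma(s)\gtrsim |\sigma_1-\sigma_1^0|^{-(\frac{4\gamma}{3}-1)}|s_2|^{-(\frac{(2m+1)\gamma}{m+1}-1)}$, which is what produces $\tfrac{3}{4\gamma-3}$. The crossover $\gamma=\tfrac{3n-3}{3n-2}$ is precisely where $\tfrac{3}{4\gamma-3}=\tfrac{3n-1}{2n\gamma-n-1}$. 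Without identifying this off-origin $A_3$ degeneration, your scheme has no source for the modified exponent in \eqref{estsharpexp}, so this part of the proposal is incomplete rather than merely an alternative route.
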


Now, we consider the non-linear adapted case e.g. $2m+1<n$.

\begin{thm}\label{mgambaho}
Suppose $\phi$ is a smooth function having $D_{n+1} (3\le n\le \infty)$ type singularity at the origin and satisfying the $R-$ condition  and $2m+1< n$, then exists a neighborhood $V\times U\subset \mathbb{R}^2\times \mathbb{R}^2)$ of the origin such that for any $a\in C_0^\infty(V\times U)$ the following inclusion:
\begin{eqnarray*}
M_\gamma\in L^{\frac{2(2n-m-1)}{2n\gamma-n-1}-0}_{loc}(\mathbb{R}^2)
\end{eqnarray*}
holds, whenever $\frac{n+1}{2n}<\gamma\le \frac{m+3}{2(m+1)}$. If $\frac{m+3}{2(m+1)}<\gamma\le 1$ then the following inclusion:
\begin{eqnarray*}
M_\gamma\in
L^{\frac{3m+1}{(2m+1)\gamma-m-1}-0}_{loc}(\mathbb{R}^2)
\end{eqnarray*}
holds true.

\end{thm}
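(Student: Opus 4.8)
The strategy is to use the normal form of Proposition~\ref{normform} to fibre the two–dimensional integral $I(\lambda,s)$ over a one–dimensional oscillatory integral, to estimate the latter by van der Corput–type lemmas with an order that reflects the position of $s$ relative to the caustic, and finally to convert the resulting pointwise bounds into the $L^p_{loc}$–membership of $M_\gamma$ by a multiscale summation, the two $\gamma$–regimes corresponding to which scales dominate.

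\emph{Step 1 (normalisation).} By Proposition~\ref{normform} and the $R$–condition we may, after a linear change of coordinates and after shrinking $V$, assume
$$\phi(x_1,x_2)=b(x_1,x_2)\,\bigl(x_2-\psi(x_1)\bigr)^{2}+b_0(x_1),$$
with $b(0,0)=0$, $\partial_1 b(0,0)\neq0$, $\partial_2 b(0,0)=0$, $\psi(x_1)=x_1^{m}\omega(x_1)$, $\omega(0)\neq0$, and $b_0(x_1)=x_1^{n}\beta(x_1)$, $\beta(0)\neq0$ (with $b_0\equiv0$ and $h(\phi)=2$ when $n=\infty$, in which case the exponents are read in the limit $n\to\infty$), and that $a$ is supported in an arbitrarily small neighbourhood of the origin. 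The measure–preserving substitution $x_2\mapsto x_2+\psi(x_1)$ turns the phase into $b(x)\,x_2^{2}+b_0(x_1)-s_1 x_1-s_2 x_2-s_2 x_1^{m}\omega(x_1)$; the monomial $s_2 x_1^{m}$, absent in the adapted case $2m+1\ge n$, is the analytic trace of the non–adaptedness.

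\emph{Step 2 (fibrewise stationary phase in $x_2$).} Since $b(x)\sim x_1$, for $x_1$ away from $0$ the function $x_2\mapsto b(x)x_2^{2}-s_2 x_2$ has a single nondegenerate critical point. Decomposing dyadically $|x_1|\sim2^{-j}$, $j\ge0$, we apply stationary phase in $x_2$ on the $j$–th piece as long as $\lambda\,2^{-j}\gtrsim1$, picking up a factor $\sim(\lambda\,2^{-j})^{-1/2}$ and a one–dimensional integral
$$J_j(\lambda,s)=\int_{|x_1|\sim2^{-j}} e^{i\lambda\Psi_j(x_1,s)}\,A_j(x_1,s)\,dx_1,$$
whose phase $\Psi_j$ is, up to terms vanishing as $j\to\infty$, a linear combination of $x_1^{n}$ (the depth monomial, from $b_0$), $s_2 x_1^{m}$ (from $\psi$), $s_1 x_1$, and $-s_2^{2}/\bigl(4b(x_1,\cdot)\bigr)$ (the critical value of the $x_2$–integration). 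On the range $\lambda\,2^{-j}\lesssim1$ a trivial estimate of both integrations contributes $\lesssim\lambda^{-1}$ in total.

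\emph{Step 3 (one–dimensional van der Corput with parameters).} Rescaling $x_1=2^{-j}z$ turns $\lambda\,\Psi_j$ into $\mu\,\Phi(z;\sigma_1,\sigma_2)+(\text{error})$, with $\mu=\lambda\,2^{-j\rho}$, rescaled parameters $\sigma_1\sim s_1\,2^{2mj}$ and $\sigma_2\sim s_2\,2^{(m+1)j}$, and an error tending to $0$ with $j$; here $\rho$ is the relevant Newton–polyhedron exponent, equal to $n$ on the dyadic scales where the depth monomial dominates and to $2m+1$ on those where the non–adaptedness term dominates (this dichotomy uses $2m+1<n$). For the model integral $\int e^{i\mu\Phi(z;\sigma)}A\,dz$ van der Corput's lemma (cf.\ \cite{stein-book}) supplies $N=N(m,n)$ with $\sum_{k=2}^{N}|\partial_z^{k}\Phi|\gtrsim1$ on the support, so $|J_j|\lesssim2^{-j}\mu^{-1/N}$; on dyadic subregions of the $(\sigma_1,\sigma_2)$–plane at distance $\sim d(\sigma)$ from the caustic — the zero set in $\sigma$ of the discriminant of $\Phi(\cdot;\sigma)$ in $z$ — the oscillation order drops to $2$ or $3$, giving the sharper $|J_j|\lesssim2^{-j}\bigl(\mu\,d(\sigma)\bigr)^{-1/2}$ or $2^{-j}\bigl(\mu\,d(\sigma)\bigr)^{-1/3}$. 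Interpolating these with the trivial bound $|J_j|\lesssim2^{-j}$ (the crossover being at an explicit threshold for $2^{j}$ in terms of $\lambda$ and $d(\sigma)$) and combining with Step 2, one gets for each fixed $s$ a pointwise bound for $|I(\lambda,s)|$ as a sum over dyadic scales of terms of type $\lambda^{-1/2}\,2^{-j/2}\bigl(\lambda\,2^{-j\rho}d(\sigma)\bigr)^{-1/\kappa}$, which in the limit $s\to0$ reproduces Duistermaat's uniform estimate $|I(\lambda,s)|\lesssim\lambda^{-(n+1)/(2n)}$ \cite{duistermaat}.

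\emph{Step 4 (summation of $M_\gamma$).} Multiplying the bound of Step 3 by $\lambda^{\gamma}$ and taking $\sup_{\lambda>1}$ annihilates every term with nonpositive $\lambda$–exponent and, for the rest, freezes $\lambda$ at the threshold where a trivial bound takes over. What remains is a sum, indexed by the dyadic scales $j$ and by a dyadic splitting of $s$, of functions of $s$ each dominated by a fixed power of the scale on a set of $s$ of measure equal to another fixed power of that scale; hence $\|M_\gamma\|_{L^p(K)}^{p}$ is majorised by a series of the form $\sum_{\text{scales}}(\text{amplitude})^{p}(\text{measure})$, convergent exactly for $p$ strictly below the asserted exponent. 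The two regimes distinguish which family of scales dominates: for $\frac{n+1}{2n}<\gamma\le\frac{m+3}{2(m+1)}$ the decisive scales are those on which the depth monomial $x_1^{n}$ governs the oscillation, yielding $\frac{2(2n-m-1)}{2n\gamma-n-1}$; for $\frac{m+3}{2(m+1)}<\gamma\le1$ they are those on which the non–adaptedness monomial $s_2 x_1^{m}$ dominates, yielding $\frac{3m+1}{(2m+1)\gamma-m-1}$. The two expressions coincide at $\gamma=\frac{m+3}{2(m+1)}$, where both equal $2(m+1)$, so the two parts of the theorem fit together continuously.

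\emph{Main obstacle.} The hard part is Steps 3 and 4 in the non–linearly–adapted range $2m+1<n$: one has to describe the caustic of the $D_{n+1}$ singularity when no linearly adapted coordinates exist, so that the fold/cusp contribution produced by $x_1\bigl(x_2-\psi(x_1)\bigr)^{2}$ competes with the depth $x_1^{n}$; determine the exact oscillation order of the reduced one–dimensional phase over each dyadic region of $s$; and carry the two–parameter ($j$ and $s$) bookkeeping tightly enough that the resulting series has precisely the claimed critical exponent and that the transition at $\gamma=\frac{m+3}{2(m+1)}$ occurs with no loss.
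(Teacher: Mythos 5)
Your plan is structurally different from the paper's argument, and what you have written is a programme rather than a proof: the quantitative content of the theorem is entirely contained in your Steps~3 and~4, and you explicitly defer it as ``the hard part''. Concretely, nothing in the sketch produces the threshold $\gamma=\frac{m+3}{2(m+1)}$ or the two exponents $\frac{2(2n-m-1)}{2n\gamma-n-1}$ and $\frac{3m+1}{(2m+1)\gamma-m-1}$; you only verify \emph{a posteriori} that the two expressions agree at the threshold, which is a consistency check, not a derivation. To close the gap you would have to determine, for each dyadic scale $2^{-j}$ in $x_1$ and each dyadic region of $(s_1,s_2)$, which of the competing monomials $x_1^n$, $s_2x_1^m$, $s_1x_1$ and the critical value $-s_2^2/(4b)$ dominates the reduced one-dimensional phase, the precise van~der~Corput order and the size of the corresponding $s$-region, and then actually sum the resulting double series; none of that is carried out. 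There are also two technical points your sketch glosses over that matter in this range: after the substitution $x_2\mapsto x_2+\psi(x_1)$ one has $b(x_1,x_2+\psi)=x_1b_1+x_2^2b_2$, so the $x_2$-phase is quartic rather than quadratic near $x_1=0$, and the critical value $-s_2^2/(4b)$ you want to treat as a smooth monomial in the reduced phase is in fact singular at $x_1=0$, which is exactly where the dyadic summation is delicate.

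For contrast, the paper does not fibre over $x_2$ at all. It splits the $s$-plane into the regimes $|s_2|\ll|s_1|^{\frac{n-m}{n-1}}$, $|s_1|\lesssim|s_2|^{\frac{n-1}{n-m}}$ (with further subdivision by $|s_1|$ versus $|s_2|^{\frac{2m}{m+1}}$), applies an $s$-dependent anisotropic rescaling to the full two-dimensional integral, identifies the rescaled model phase as having at worst $A_1$, $A_2$, $A_3$ or $D_{m+1}$ singularities (Lemma~\ref{a2typesin} is the key structural input), and then invokes Duistermaat's uniform bound together with an auxiliary function $\Psi\in L^{4-0}$ near the $A_2$ caustic from \cite{akr}; the critical exponent emerges by summing the resulting weighted characteristic functions over these $s$-regions, and the $\gamma$-threshold comes out of comparing $\frac{2(2n-m-1)}{2n\gamma-n-1}$ with $\frac{3m+1}{(2m+1)\gamma-m-1}$ across the subcases. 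Your fibre-and-van-der-Corput route could in principle be made to work, but as written it has not engaged with the computation that the theorem actually requires.
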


\begin{remark}
If $n=\infty$ then it is assumed that $b_0\equiv0$ ($R-$condition) and $m$ is a natural number. Correspondingly, the inclusion
\begin{eqnarray*}
M_\gamma\in L^{\frac{4}{2\gamma-1}-0}_{loc}(\mathbb{R}^2)
\end{eqnarray*}
holds, whenever $\frac12<\gamma\le \frac{m+3}{2(m+1)}$.

\end{remark}

\section{Preliminaries}\label{prelim}

Let $\phi(x_1, x_2):=x_1x_2^2\pm x_1^n$. It is normal form of  functions having the $D_{n+1}^\pm$ type singularity.
By the definition any smooth with  $D_{n+1}^\pm$  type singularity can be reduced to the form $\phi$ by diffeomorphic change of variables (see \cite{agv1}).
\begin{lemma}\label{a2typesin}
Let $a\in \mathbb{R}^2$ be a critical point of the function
\begin{eqnarray*}
\Phi(x_1, x_2, c_1, c_2):=\phi(x_1, x_2)-c_1x_1-c_2x_2,
\end{eqnarray*} where $(c_1, c_2)\neq(0, 0)$. Then $a$ is at worst $A_2$ type singularity e.g. either $a$ is a non-degenerate critical point or the Airy type singular point.
\end{lemma}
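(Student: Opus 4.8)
The plan is to analyze the critical points of $\Phi(x_1,x_2,c_1,c_2) = x_1x_2^2 \pm x_1^n - c_1x_1 - c_2x_2$ directly by computing derivatives, and to show that at any critical point the Hessian, if degenerate, has a non-vanishing third-order derivative in the null direction, which is precisely the normal form of an $A_2$ (Airy) singularity. First I would write down the critical point equations
\begin{eqnarray*}
\partial_1\Phi &=& x_2^2 \pm n x_1^{n-1} - c_1 = 0,\\
\partial_2\Phi &=& 2x_1x_2 - c_2 = 0,
\end{eqnarray*}
and observe that the case $c_2 = 0$ forces either $x_1 = 0$ or $x_2 = 0$ at the critical point, while $c_2\neq 0$ forces both $x_1\neq 0$ and $x_2\neq 0$. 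I would keep the hypothesis $(c_1,c_2)\neq(0,0)$ in reserve to rule out the origin and the highest-order degenerations.

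Next I would compute the Hessian of $\Phi$ (which equals $\Hess\phi$ since the perturbation is linear):
\begin{eqnarray*}
\Hess\Phi(x) = \begin{pmatrix} \pm n(n-1)x_1^{n-2} & 2x_2 \\ 2x_2 & 2x_1 \end{pmatrix},
\end{eqnarray*}
with determinant $\pm 2n(n-1)x_1^{n-1} - 4x_2^2$. A critical point $a=(a_1,a_2)$ is degenerate exactly when this determinant vanishes there. The key step is then a case split on whether $a_1 = 0$ or $a_1 \neq 0$. If $a_1 \neq 0$, the bottom-right entry $2a_1$ is nonzero, so degeneracy means the matrix has rank one with a one-dimensional kernel; I would produce an explicit null vector $v$ and compute the restriction of the cubic term of $\Phi$ at $a$ in the direction $v$, showing it is a nonzero multiple of a monomial in $a_1, a_2$ — here one uses that $a_1\neq 0$ (and, where needed, that $a_2\neq 0$, which follows from $c_2\neq 0$) to conclude non-vanishing, hence $A_2$. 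If $a_1 = 0$, then $\Hess\Phi(a)$ becomes $\begin{pmatrix} 0 & 2a_2 \\ 2a_2 & 0\end{pmatrix}$ (using $n\geq 3$ so the upper-left entry vanishes at $a_1=0$); this is nondegenerate iff $a_2 \neq 0$. If $a_2 = 0$ as well then $a$ is the origin, and the critical point equations give $c_1 = c_2 = 0$, contradicting $(c_1,c_2)\neq(0,0)$. So in the $a_1=0$ branch $a$ is automatically a nondegenerate critical point.

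The remaining and most delicate step is the sub-case $a_1 \neq 0$, $c_2 \neq 0$ but with the possibility that the directional cubic derivative also vanishes, which would push the singularity to $A_3$ or worse; I expect this to be the main obstacle. I would handle it by showing that the simultaneous vanishing of $\det\Hess\Phi(a)$, of the cubic form of $\Phi$ at $a$ in the null direction, and of the gradient is an overdetermined system that, for $n\geq 3$ and $a_1\neq 0$, has no solution — intuitively because $\phi = x_1x_2^2 \pm x_1^n$ is a $D_{n+1}$ normal form whose only singular point of type worse than $A_2$ after a generic linear perturbation sits at the origin, which is excluded. Concretely one solves the degeneracy relation for $x_2^2$ in terms of a power of $x_1$, substitutes into the cubic-direction condition, and checks that the resulting polynomial identity in $x_1$ forces $x_1 = 0$. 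Once these cases are exhausted, every critical point of $\Phi$ is either nondegenerate or of type $A_2$, which is the claim.
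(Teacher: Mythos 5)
Your plan is essentially the paper's: analyze critical points of $\Phi$ directly, use the explicit Hessian
$\Hess\Phi(x)=\begin{pmatrix}\pm n(n-1)x_1^{n-2}&2x_2\\2x_2&2x_1\end{pmatrix}$,
show that degeneracy is impossible unless $a_1\neq0$ and $a_2\neq0$, and when the Hessian is degenerate test the cubic part of $\Phi$ at $a$ along the one-dimensional kernel. The case structure you set up (and the observation that the origin is excluded because $(c_1,c_2)\neq(0,0)$, and that $a_1=0$ leads to a nondegenerate Hessian $\begin{pmatrix}0&2a_2\\2a_2&0\end{pmatrix}$) is correct. The paper additionally notes that when $n$ is odd and the sign is $-$, $\det\Hess=-4a_2^2-2n(n-1)a_1^{n-1}<0$ for all $a\neq0$, so this case is settled without any cubic computation; your framework recovers this too, since the degeneracy relation $a_2^2=\pm\tfrac{n(n-1)}{2}a_1^{n-1}$ is then unsatisfiable.

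The gap is that you never actually carry out the decisive third-order computation; you describe it as the ``most delicate step,'' appeal to a heuristic about generic perturbations, and guess that the nonvanishing should come from ``a polynomial identity forcing $x_1=0$.'' This mischaracterizes what happens and leaves the lemma unproved. Concretely: when $\det\Hess\Phi(a)=0$ with $a_1\neq0$, the quadratic part of $\Phi(a+x)$ becomes $a_1\bigl(x_2+\tfrac{a_2}{a_1}x_1\bigr)^2$, so the kernel direction is $x_2=-\tfrac{a_2}{a_1}x_1$. The cubic part of $\Phi$ at $a$ is $\pm\tfrac{n(n-1)(n-2)}{6}a_1^{n-3}x_1^3+x_1x_2^2$, and substituting $x_2=-\tfrac{a_2}{a_1}x_1$ and then $a_2^2=\pm\tfrac{n(n-1)}{2}a_1^{n-1}$ yields
$\pm\,a_1^{n-3}\,\tfrac{n(n-1)}{2}\bigl(\tfrac{n-2}{3}+1\bigr)x_1^3=\pm\,a_1^{n-3}\,\tfrac{n(n-1)(n+1)}{6}\,x_1^3$,
which is nonzero for $a_1\neq0$ and $n\geq3$ without any further case analysis --- there is no genuinely delicate sub-case, no overdetermined system, and no need to conclude $x_1=0$; the cubic coefficient is just a nonzero constant times $a_1^{n-3}$. (Incidentally, the paper's own displayed formula carries the $\pm$ inside the parenthesis, $\bigl(1\pm\tfrac{n-2}{3}\bigr)$, which is a typo: as the computation above shows, the sign multiplies the whole expression, and the bracketed factor is always $\tfrac{n+1}{3}>0$.) You should finish your proof by carrying out exactly this substitution; as written, the central nonvanishing claim is asserted but not established.
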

We remind that,   if the function can be written in the form $\pm x_2^2+x_1^3+ const$ (see \cite{agv1}) after possible diffeomorphic change of variables we say that it has Airy (or $A_2$) type singularity at the origin. Non-degenerate critical point means the following:  the function by diffeomorphic change of variables can be written in the form: $\pm x_1^2\pm x_2^2+ const$ in a sufficiently small neighborhood of the critical point. By the classical Morse Lemma it is equivalent to the condition $\det Hess\Phi(a, c)\neq0$ (see \cite{fedoryuk}).

\begin{proof}
Let $a:=(a_1, a_2)$ be a critical point. Since $c:=(c_1, c_2)\neq0$, so $a\neq0$ provided $a$ is a critical point of the function  $\Phi(x_1, x_2, c_1, c_2)$. Then we have $\det Hess\Phi(a, c)= -4a_2^2\pm2n(n-1) a_1^{n-1}$, (where $Hess \Phi$ is the Hessian matrix of the function). Hence if $n$ is an odd number and $\phi$ has $D_{n+1}^-$ type singularity at the origin  then the function $\Phi(x_1, x_2, c_1, c_2)$ has only non-degenerate critical points. Actually, it follows from the classical Morse Lemma (see \cite{fedoryuk}),   because,  in this case $\det Hess\Phi(a, c)=-4a_2^2-2n(n-1) a_1^{n-1}<0$.

Suppose either $n$ is an even number or $n$ is an odd number and also the function  $\phi$ has $D_{n+1}^+$  type singularity at the origin.   If $\det Hess\phi(a)=0$, then necessarily we have $a_1\neq0$ and $a_2\neq0$. Moreover, we have
 $Hess\Phi(a+x, c)=a_1(x_2+\frac{a_2}{a_1} x_1)^2$.
Then straightforward computations show that
\begin{eqnarray*}
\Phi_3(x_1,  -\frac{a_2}{a_1} x_1)=x_1^3a_1^{n-3}\frac{n(n-1)}2\left(1\pm \frac{n-2}3\right).
\end{eqnarray*}
Since $n$ is an even number or $n$ is an odd number and we dealt with plus sign   then we have $\Phi_3(x_1,  -\frac{a_2}{a_1} x_1)\not=0$. Thus,  the phase function has $A_2$ type singularity at the point $a$ under the condition $\det Hess\Phi(a, c)=0$.
\end{proof}

\section{Linearly adapted case}\label{LA}

In this section we give a proof of the Theorem \ref{adaptmain} for the case when $\gamma=1$. Then the general case of $\frac{n+1}{2n}<\gamma\le1$ can be derived  by interpolation  the obtained estimate with the classical uniform bound proved by Duistermaat in the paper \cite{duistermaat}.

\begin{thm}\label{adapcase}
Suppose $\phi$ is a smooth function having $D_{n+1} (3\le n<\infty)$ type singularity at the origin  and $2m+1> n$, then exists a neighborhood $V\times U\subset \mathbb{R}^2\times \mathbb{R}^2$ of the origin such that for any $a\in C_0^\infty(V\times U)$ the following inclusion:
\begin{equation}\label{estran}
M_{1}\in L_{loc}^{3+\frac2{n-1}-0}(\mathbb{R}^2)
\end{equation}
holds true. Moreover, if $a(0, 0)\neq0$ then
\begin{equation}\label{estsharp}
M_{1}\not\in L_{loc}^{3+\frac2{n-1}}(\mathbb{R}^2).
\end{equation}
Suppose $\phi$ is a smooth function having $D_{n+1}$ type singularities at the origin  and $2m+1= n$, then exists a neighborhood $V\times U$ of the origin such that for any $a\in C_0^\infty(V\times U)$  the following inclusion:
\begin{equation}\label{estraneq}
M_{1}\in L_{loc}^{3-0}(\mathbb{R}^2).
\end{equation}
holds true. Moreover, there exists a function having $D_{n+1}$ type singularities at the origin such that
\begin{equation}\label{estsharpeq}
M_{1}\not\in L_{loc}^{3}(\mathbb{R}^2).
\end{equation}
 under the condition:  $a(0, 0)\neq0$.
\end{thm}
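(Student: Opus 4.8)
The plan is to reduce the problem to a one-parameter family of oscillatory integrals and then to a stationary-phase analysis governed by the normal form \eqref{(1.2.2)}. First I would exploit Proposition \ref{normform}: after a linear change of variables $\phi(x_1,x_2)=b(x_1,x_2)(x_2-\psi(x_1))^2+b_0(x_1)$ with $b(0,0)=0$, $\partial_1 b(0,0)\neq0$, $\partial_2 b(0,0)=0$, $\psi(x_1)=x_1^m\omega(x_1)$, $b_0(x_1)=x_1^n\beta(x_1)$. Since $\partial_1 b(0,0)\neq 0$, the map $(x_1,x_2)\mapsto(x_1,x_2-\psi(x_1))$ followed by absorbing $b$ lets me treat the $x_2$-integration first. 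For fixed $x_1$ and fixed $s_2$, the phase in $x_2$ is essentially $b(x_1,\psi+u)u^2 - s_2 u$ up to lower order; this is a nondegenerate (Morse) or at worst fold-type situation in $x_2$ as long as $x_1\neq 0$, so a single application of the van der Corput/stationary phase lemma gives a gain of $\lambda^{-1/2}|x_1|^{-1/2}$ with the critical point $u_c(x_1,s_2)$ and the reduced phase $\Psi(x_1,s_1,s_2)=b_0(x_1)-s_1x_1 - s_2\psi(x_1) + (\text{quadratic correction in } s_2)$. The remaining $x_1$-integral then has a phase that, near $x_1=0$, behaves like $c\,x_1^n - s_1 x_1 + (\text{terms involving } s_2 x_1^m)$, i.e. an $A_{n-1}$-type phase perturbed by the two parameters; together with the Jacobian weight $|x_1|^{-1/2}$ this is what produces the exponent $3+\frac{2}{n-1}$.

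Next I would estimate $M_1(s)=\sup_{\lambda>1}\lambda|I(\lambda,s)|$ by combining the $\lambda^{-1/2}$ gain from $x_2$ with a $\lambda^{-1/2}$-type gain from the $x_1$-integral, but keeping careful track of how the latter degenerates as $s=(s_1,s_2)$ approaches the caustic. The pointwise bound should take the shape $M_1(s)\lesssim \big(\text{dist}(s,\Sigma)\big)^{-a}$ near the relevant singular set $\Sigma\subset\mathbb{R}^2$ (the image of the caustic / the set where the $x_1$-phase has a degenerate critical point), with the precise exponent $a$ dictated by the Airy/higher-cusp scaling. For a phase of type $A_{n-1}$ in $x_1$, van der Corput gives $\lambda^{-1/(n-1)}$ at the worst point, and the caustic contribution scales so that $M_1$ is locally in $L^p$ precisely for $p<3+\frac{2}{n-1}$; integrability is then a matter of checking that $\int_{|s|<\delta}\text{dist}(s,\Sigma)^{-ap}\,ds<\infty$ for such $p$, using that $\Sigma$ is a one-dimensional set (a curve, possibly with a cusp, in $s$-space). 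For the sharp failure \eqref{estsharp} I would, assuming $a(0,0)\neq0$, produce an explicit lower bound for $M_1(s)$ along a sequence $s\to 0$ approaching $\Sigma$ — taking $\lambda$ tuned to the scaling — showing $M_1(s)\gtrsim \text{dist}(s,\Sigma)^{-a}$, whence $M_1\notin L^{3+2/(n-1)}_{loc}$ because the integral of that power diverges (logarithmically) at the endpoint.

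For the equal case $2m+1=n$ I expect the $x_2$- and $x_1$-degeneracies to interact: the critical point $u_c(x_1,s_2)$ in $x_2$ can coalesce with the degeneracy of the $x_1$-phase at the same scale, so the naive iterated-stationary-phase bound is lost on a thin set and one must instead resolve a genuinely two-dimensional degenerate critical point (a $D$-type caustic rather than two independent folds). There I would use a dyadic decomposition in $|x_1|\sim 2^{-j}$ and $|x_2-\psi(x_1)|\sim 2^{-k}$, apply the uniform $D_{n+1}$ bound $\lambda^{-(n+1)/(2n)}$ of Duistermaat \cite{duistermaat} on the bad piece and the iterated bound on the good pieces, sum the resulting geometric-type series, and check that the worst piece gives $M_1$ locally in $L^p$ for all $p<3$ but not $p=3$; the existence statement \eqref{estsharpeq} would come from exhibiting one normal form (e.g. $\psi$ exactly $x_1^m$, $b_0$ exactly $\pm x_1^n$) for which the two caustics really do coincide and the endpoint integral diverges. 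The general $\frac{n+1}{2n}<\gamma\le1$ statement of Theorem \ref{adaptmain} then follows, as the section's opening paragraph indicates, by interpolating these $\gamma=1$ bounds against the uniform Duistermaat estimate.

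\medskip
The main obstacle I anticipate is the bookkeeping of the caustic geometry in $s$-space: one must identify $\Sigma$ and the exact blow-up rate of $M_\gamma$ transverse to it (distinguishing the smooth-fold part of $\Sigma$ from its cusp points), because the value of the critical exponent $p$ is entirely decided by that rate together with $\dim\Sigma=1$; getting the constant $3+\frac{2}{n-1}$ rather than something off by the fold-versus-cusp discrepancy is the delicate point, and the coalescing-caustic case $n=2m+1$ is where this is hardest.
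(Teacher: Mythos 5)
The paper does not actually prove the upper bound \eqref{estran} here; it is quoted verbatim from \cite{ikr24}, so the Theorem \ref{adapcase} proof carries the burden only of the lower (sharpness) bounds \eqref{estsharp} and \eqref{estsharpeq}. Your sketch instead tries to reconstruct the upper bound as well; that is extra, and where you differ most from the paper, and where the gaps sit.

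There are two concrete problems. First, a numerical one: for a phase with $x_1^n$ behaviour (type $A_{n-1}$) the van der Corput rate is $\lambda^{-1/n}$, not $\lambda^{-1/(n-1)}$ as you wrote. This matters because combining $\lambda^{-1/2}$ from the $x_2$-fold with $\lambda^{-1/n}$ from the $x_1$-cusp is exactly how one recovers the Duistermaat exponent $\frac{n+1}{2n}$; the wrong rate desynchronises everything downstream.

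Second, and more structurally, you attribute the critical exponent $3+\frac{2}{n-1}$ to blow-up transverse to the one-dimensional caustic $\Sigma$, proposing $M_1(s)\lesssim\operatorname{dist}(s,\Sigma)^{-a}$ with $a$ set by the fold/cusp scaling. But the smooth fold part of $\Sigma$ produces $M_1(s)\sim\operatorname{dist}(s,\Sigma)^{-1/4}$, which fails only at $p=4$, and $4>3+\frac{2}{n-1}$ whenever $n>3$. So the fold caustic cannot be the bottleneck. The actual obstruction is the quasi-homogeneous blow-up at the single point $s=0$. The paper's argument makes this precise: after the anisotropic rescaling $x_1=\lambda^{-1/n}y_1$, $x_2=\lambda^{-(n-1)/(2n)}y_2$ with $\sigma_1=\lambda^{(n-1)/n}s_1$, $\sigma_2=\lambda^{(n+1)/(2n)}s_2$ bounded, one shows $\lambda^{(n+1)/(2n)}I(\lambda,s)\to c\neq0$, which with the choice $\lambda\sim\rho(s)^{-1}$ for $\rho(s)=|s_1|^{n/(n-1)}+|s_2|^{2n/(n+1)}$ gives
$$
M_1(s)\gtrsim\rho(s)^{-\frac{n-1}{2n}},
$$
and the quasi-homogeneous dimension $\frac{n-1}{n}+\frac{n+1}{2n}=\frac{3n-1}{2n}$ of $s$-space then pins the critical exponent at exactly $\frac{3n-1}{n-1}=3+\frac{2}{n-1}$ (with logarithmic divergence at the endpoint, as you anticipated). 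In short, the sharpness comes from approaching the \emph{vertex} $s=0$ along all quasi-homogeneously comparable directions, not from approaching the fold curve.

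Your reading of the exceptional case $n=2m+1$ is basically right in spirit — the worse exponent $p=3$ does come from an $A_3$ cusp on the caustic — but the paper makes this a precise dichotomy: $A_3$ points exist only when the leading Taylor data $(\beta(0),\omega(0),b_1(0,0))$ lies in an explicit algebraic set $\mathfrak{A}_3$, and the sharpness example \eqref{estsharpeq} is an explicit normal form satisfying that constraint, with lower bound $M_1(s)\gtrsim |s_1/s_2^{2m/(m+1)}-\sigma_1^0|^{-1/3}|s_2|^{-m/(m+1)}$, which indeed is just barely not in $L^3_{loc}$. If you wanted to complete your sketch, you would need to identify $\mathfrak{A}_3$ and verify both that away from it $A_3$ cannot occur (so \eqref{estran} persists) and that on it the cusp contribution dominates; the dyadic decomposition you suggest is not needed for the lower bound, which is a pointwise scaling computation as above.
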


\begin{proof} The inclusion $M_{1}\in L_{loc}^{3+\frac2{n-1}-0}(\mathbb{R}^2)$ was proved in the paper \cite{ikr24}.
Suppose $a(0, 0)\neq0$ and the amplitude function is concentrated in a sufficiently small neighborhood of the origin.
We show that $M_{1}\not\in L_{loc}^{3+\frac2{n-1}}(\mathbb{R}^2)$.

Consider the following integral, assuming $2m+1>n$:
\begin{eqnarray*}
I(\lambda, s):=\int_{\mathbb{R}^2} a(x,s)e^{i\lambda \Phi(x,s)}dx,
\end{eqnarray*}
where
\begin{eqnarray*}
\Phi(x,s):=b(x_1, x_2)(x_2-x_1^m\omega(x_1))^2+x_1^n\beta(x_1)-s_1x_1-s_2x_2,
\end{eqnarray*}
with smooth functions $b, \omega, \beta$ satisfying the conditions: $\omega(0)\neq0, \beta(0)\neq0$.
The function $b$ is smooth and it satisfies the conditions:
\begin{eqnarray*}
b(0, 0)=0, \quad \partial_1b(0, 0)\neq0, \, \partial_2b(0, 0)=0.
\end{eqnarray*}

By condition of the Theorem \ref{adapcase} $a(0, 0)\neq0$, then WLOG we assume $a(0, s)\equiv1$ for $s\in U$, where $U\subset \mathbb{R}^2$ is a sufficiently small neighborhood of the origin. Then we can  write the integrals as
\begin{eqnarray*}
I(\lambda, s):=\int_{\mathbb{R}^2} \chi_0(x)e^{i\lambda \Phi(x,s)}dx+ \int_{\mathbb{R}^2}x_1 a_1(x, s)e^{i\lambda \Phi(x,s)}dx+\int_{\mathbb{R}^2} x_2 a_2(x, s)e^{i\lambda \Phi(x,s)}dx,
\end{eqnarray*}
where $a_j\in C_0^\infty(V\times U), \, j=1,2$ and $\chi_0$ is a non-negative smooth cut-off function satisfying the condition $\chi_0(x)\equiv1$ in a neighborhood of the origin.

Suppose $|s_1\lambda^\frac{n-1}n|+|s_2\lambda^\frac{n+1}{2n}|\lesssim1$.
We show that
 \begin{eqnarray*}
\int_{\mathbb{R}^2}x_1 a_1(x, s)e^{i\lambda \Phi(x,s)}dx+\int_{\mathbb{R}^2} x_2 a_2(x, s)e^{i\lambda \Phi(x,s)}dx=O(\lambda^{-\frac{n+3}{2n}}),\, \mbox{as}\, \lambda\to +\infty.
\end{eqnarray*}
First, we prove the following asymptotic relation:
 \begin{eqnarray*}
I_1(\lambda, s):=\int_{\mathbb{R}^2}x_1 a_1(x, s)e^{i\lambda \Phi(x,s)}dx=O(\lambda^{-\frac{n+3}{2n}}),\, \mbox{as}\, \lambda\to +\infty.
\end{eqnarray*}
Similarly, one can prove the asymptotic relation:
 \begin{eqnarray*}
\int_{\mathbb{R}^2} x_2 a_2(x, s)e^{i\lambda \Phi(x,s)}dx=O(\lambda^{-1}),\, \mbox{as}\, \lambda\to +\infty.
\end{eqnarray*}

Let's  use change of variables $x_1=\lambda^{-\frac1n}y_1, \, x_2=\lambda^{-\frac{n-1}{2n}}y_2$. For the sake of brevity  we use notation $x=\delta_{1/\lambda}(y)$ and obtain:
\begin{eqnarray*}
I_1(\lambda, s)=\frac1{\lambda^\frac{n+3}{2n}} \int_{\mathbb{R}^2}y_1 a_1(\delta_{1/\lambda}(y), s)e^{i\lambda \Phi_1(y, s, \lambda)}dy,
\end{eqnarray*}
where
 \begin{eqnarray*}
\Phi_1(y, \sigma, \lambda)=(y_1b_1(\delta_{1/\lambda}(y))+ \lambda^\frac{n-2}n y_2^2 b_2(\lambda^{-\frac{n-1}{2n}}y_2))(y_2^2-\lambda^\frac{n-1-2m}{2n}y_1^m\omega(\lambda^{-\frac1n}y_1))^2+\\ y_1^n\beta(\lambda^{-\frac1n}y_1) -\sigma_1y_1-\sigma_2y_2, \, \sigma_1:=\lambda^\frac{n-1}n s_1, \, \sigma_2:=\lambda^\frac{n+1}{2n} s_2.
\end{eqnarray*}

By our assumption $|\sigma_1|+|\sigma_2|\lesssim1$.
Consider the following  vector field defined on the set $|y|\gtrsim1$:
\begin{eqnarray*}
v:=\frac{\partial_1\Phi_1(y, \sigma, \lambda)}{|\partial_1\Phi_1(y, \sigma, \lambda)|^2+|\partial_2\Phi_1(y, \sigma, \lambda)|^2}\partial_1+
\frac{\partial_2\Phi_1(y, \sigma, \lambda)}{|\partial_1\Phi_1(y, \sigma, \lambda)|^2+|\partial_2\Phi_1(y, \sigma, \lambda)|^2}\partial_2.
\end{eqnarray*}
 Since $|\sigma_1|+|\sigma_2|\lesssim1$ then $|\partial_1\Phi_1(y, \sigma, \lambda)|^2+|\partial_2\Phi_1(y, \sigma, \lambda)|^2\neq0$ for any $y\in \{y: |y|\gtrsim1\}$. So, the vector field is well-defined
and we have $v(e^{i\Phi_1(y, \sigma, \lambda)})=e^{i\Phi_1(y, \sigma, \lambda)}$ on the set $ \{y: |y|\gtrsim1\}$.

Let $M$ be a sufficiently big fixed number. Then we have
\begin{eqnarray*}
I_1(\lambda, s)=\frac1{\lambda^\frac{n+3}{2n}} \int_{\mathbb{R}^2}y_1 a_1(\delta_{1/\lambda}(y), s)\chi_0\left( \frac{y}{M}\right)e^{i\lambda \Phi_1(y, s, \lambda)}dy+\\ \frac1{\lambda^\frac{n+3}{2n}} \int_{\mathbb{R}^2}y_1 a_1(\delta_{1/\lambda}(y), s)\left(1-\chi_0\left( \frac{y}{M}\right)\right)e^{i\lambda \Phi_1(y, s, \lambda)}dy=:I_{11}(\lambda, s)+I_{12}(\lambda, s),
\end{eqnarray*}
where we choose the number $M$ such that the phase function $\Phi_1$ has no critical points on the support of the function $1-\chi_0\left( \frac{\cdot}{M}\right)$.
We can use integration by parts formula for the integral $I_{12}(\lambda, s)$:
\begin{eqnarray*}
I_{12}(\lambda, s)=\frac1{\lambda^\frac{n+3}{2n}} \int_{\mathbb{R}^2}y_1 a_1(\delta_{1/\lambda}(y), s)\left(1-\chi_0\left( \frac{x}{M}\right)\right)v(e^{i\lambda \Phi_1(y, s, \lambda)})dy=\\
\frac1{\lambda^\frac{n+3}{2n}} \int_{\mathbb{R}^2}v^*\left( y_1 a_1(\delta_{1/\lambda}(y), s)\left(1-\chi_0\left( \frac{x}{M}\right)\right)\right) e^{i\lambda \Phi_1(y, s, \lambda)}dy,
\end{eqnarray*}
where $v^*$ is the adjoint operator to the operator defined by the vector field $v$.
The last integral converges and we have the following uniform estimate
\begin{eqnarray*}
|I_{12}(\lambda, s)|\lesssim \frac1{\lambda^\frac{n+3}{2n}}.
\end{eqnarray*}
Similarly, we have
\begin{eqnarray*}
|I_{22}(\lambda, s)|\lesssim\frac1{\lambda}.
\end{eqnarray*}

Now we show that
the following limit exists:
\begin{eqnarray*}
\lim_{\lambda\to +\infty,  \sigma\to 0}\tilde I_1(\lambda, s)=c\neq0,
\end{eqnarray*}
where $\tilde I_1(\lambda, s)=\lambda^\frac{n+1}{2n} I_1(\lambda, s)$.

It is easy to show that
\begin{eqnarray*}
\lim_{\lambda\to +\infty, \sigma\to 0}\tilde I_1(\lambda, s)=\lim_{\lambda\to +\infty} \int_{\mathbb{R}^2} \chi_0(\delta_{1/\lambda}(x)e^{i(b_1(0, 0)x_1x_2^2+x_1^n\beta(0))}dx
\end{eqnarray*}
For the sake of being defined assume that $b_1(0, 0)=1$ and $\beta(0)=1$. Also, we assume that $\chi_0(x_1, x_2)=\chi_0(x_1)\chi_0(x_2)$.

Let's compute the last limit. We write the last integral as an iterated integral:
\begin{eqnarray*}
 \int_{\mathbb{R}^2} \chi_0(\delta_{1/\lambda}(x)e^{i(x_1x_2^2+x_1^n\beta(0))}dx= \int_{|x_1|<\lambda^{-\varepsilon}}dx_1 e^{ix_1^n} \int_{\mathbb{R}} \chi_0\left(\frac{x_2}{\lambda^\frac{n-1}{2n}}\right)e^{ix_1x_2^2}dx_2+\\
 \int_{|x_1|\ge\lambda^{-\varepsilon}}dx_1 \chi_0\left(\frac{x_1}{\lambda^\frac1n}\right)e^{ix_1^n} \int_{\mathbb{R}} \chi_0\left(\frac{x_2}{\lambda^\frac{n-1}{2n}}\right)e^{ix_1x_2^2}dx_2,
\end{eqnarray*}
where $0<\varepsilon<\frac1n$ is a fixed number. Note that $ \chi_0\left(\frac{x_1}{\lambda^\frac1n}\right)=1$ for $|x_1|<\lambda^{-\varepsilon}$, whenever $\lambda>2$.
Due to Van der Corput Lemma (see \cite{arhipov} and \cite{stein-book})  we have:
\begin{eqnarray*}
\left|\int_{\mathbb{R}} \chi_0\left(\frac{x_2}{\lambda^\frac{n-1}{2n}}\right)e^{ix_1x_2^2}dx_2 \right|\lesssim \frac1{|x_1|^\frac12}.
\end{eqnarray*}
Thus, we have
\begin{eqnarray*}
\left|\int_{|x_1|<\lambda^{-\varepsilon}}dx_1 e^{ix_1^n} \int_{\mathbb{R}} \chi_0\left(\frac{x_1}{\lambda^\frac{n-1}{2n}}\right)e^{ix_1x_2^2}dx_2\right|\lesssim \lambda^{-\varepsilon/2}.
\end{eqnarray*}

If $|x_1|>\lambda^{-\varepsilon}$ then $\lambda^\frac{n-1}{2n}|x_1|>>1$. Thus, due to stationary phase method (\cite{fedoryuk}) we have:
\begin{eqnarray*}
\int_{\mathbb{R}} \chi_0\left(\frac{x_2}{\lambda^\frac{n-1}{2n}}\right)e^{ix_1x_2^2}dx_2=\frac{e^{i\frac{\pi}4 sign(x_1)}}{|x_1|^\frac12}+ R(x_1, \lambda),
\end{eqnarray*}
where $R$ is a function satisfying the condition
\begin{eqnarray*}
|R(x_1, \lambda)|\lesssim \frac1{1+|x_1 \lambda^\frac{n-1}{2n}|^2}.
\end{eqnarray*}
Hence
\begin{eqnarray*}
\int_{\mathbb{R}}|R(x_1, \lambda)|dx_1\lesssim \frac1{\lambda^\frac{n-1}{2n}}.
\end{eqnarray*}
Consequently,
\begin{eqnarray*}
 \int_{\mathbb{R}^2} \chi_0(\delta_{1/\lambda}(x)e^{i(x_1x_2^2+x_1^n)}dx=\int_{\mathbb{R}} \frac{e^{i(\frac{\pi}4 sign(x_1)+x_1^n)}}{|x_1|^\frac12}dx_1+O(\lambda^{-\varepsilon/2})=\\ \frac{\sqrt{2}}{n} \Gamma(1/(2n))e^{i\pi/(4n)}+O(\lambda^{-\varepsilon/2}).
\end{eqnarray*}

If $n$ is an odd number, then we get
\begin{eqnarray*}
 \int_{\mathbb{R}^2} \chi_0(\delta_{1/\lambda}(x)e^{i(x_1x_2^2+x_1^n)}dx=\int_{\mathbb{R}} \frac{e^{i(\frac{\pi}4 sign(x_1)+x_1^n)}}{|x_1|^\frac12}dx_1+O(\lambda^{-\varepsilon/2})=\\ \frac{2}{n} \Gamma(1/(2n))\cos{\left(\frac{\pi}4+\frac{\pi}{4n}\right)}+O(\lambda^{-\varepsilon/2}).
\end{eqnarray*}

Since in both cases $n\ge 3$ we have:
\begin{eqnarray*}
\lim_{\lambda\to +\infty} \int_{\mathbb{R}^2} \chi_0(\delta_{1/\lambda}(x)e^{i(x_1x_2^2+x_1^n\beta(0))}dx=c\neq0.
\end{eqnarray*}

Let's introduce the quasi-distance $\rho =|s_1|^\frac{n}{n-1}+|s_2|^\frac{2n}{n+1}$.
 Then we obtain the following lower bound for the Randol's maximal function:
 \begin{eqnarray*}
M_1(s)\ge \frac{c}{\rho(s)^\frac{n-1}{2n}},
\end{eqnarray*}
where $c>0$ is a positive number.    Consequently $M_1\not\in L^{3+\frac2{n-1}}_{loc}(\mathbb{R}^2)$. This shows the sharpness of the result of Theorem
 \ref{adapcase} for the case $2m+1>n$.

 Now, we consider the exceptional case

\section{The exceptional  case $n=2m+1$}\label{excep}

Now, we consider the case $n=2m+1$.

We use change of variables given by scaling $x_1=\rho^\frac1{2m+1} y_1,\,  y_2=\rho^\frac{m}{2m+1} y_2$
and  use notation $x=\delta_{\rho}(y)$ to obtain:
\begin{eqnarray*}
I(\lambda, s)=\rho^\frac{m+1}{2m+1}\int e^{i\rho\lambda \Phi_1(y, s)}a(\delta_{\rho}(y), s)dy,
\end{eqnarray*}
where
\begin{eqnarray*}
\Phi_1(y, s):=(y_1(b_1(\delta_{\rho}(y))- \rho^\frac{2m-1}{2m+1} y_2^2b_2(\rho^\frac{m}{2m+1} y_2))y_2^2+y_1^n\beta(\rho^\frac1{2m+1} y_1)- \\ \sigma_2y_1^m\omega(\rho^\frac1{2m+1} y_1)-  \sigma_1y_1 -\sigma_2y_2,
\end{eqnarray*}
with notation: $\sigma_1:=\frac{s_1}{\rho^\frac{2m}{2m+1}}, \sigma_2:=\frac{s_2}{\rho^\frac{m+1}{2m+1}}$.

If $|y|\gtrsim1$ then the phase function has no critical points. Hence the main part of the integral is defined by the integral:
\begin{eqnarray*}
I_0(\lambda, s)=\rho^\frac{m+1}{2m+1}\int e^{i\rho\lambda \Phi_1(y, s)}a(\delta_{\rho}(y), s)\chi_0(y)dy.
\end{eqnarray*}

Note that $(\sigma_1, \sigma_2)\in \Sigma:= \{(\sigma_1, \sigma_2): \rho(\sigma_1, \sigma_2)=1\}$. Suppose $\sigma^0\in \Sigma$ is a fixed point then the phase function $\Phi_1(y, s)$ can be considered as a small smooth perturbation of the function:
\begin{eqnarray*}
\phi_1(y):=y_1y_2^2b_1(0, 0)+y_1^n\beta(0)-\sigma_2^0y_1^m\omega(0)- \sigma_1^0y_1 -\sigma_2^0y_2.
\end{eqnarray*}
Assume  $y^0:=(y_1^0, y_2^0)$ is a critical point of the function $\phi_1$.
We investigate type of the critical point $y^0$.

The following result holds true.

\begin{prop} Suppose $\phi$ is a smooth function having $D_{n+1} (3\le n<\infty)$ type singularity at the origin.
Assume that $n=2m+1$. Then the phase function  $\phi_1$ has critical point of type $A_3$ at some point $y^0$ for some $\sigma^0\in \Sigma$
if and only if  $(\beta(0), \omega(0), b_1(0, 0))\in \mathfrak{A}_3$,
where  $\mathfrak{A}_3$ is the set of all triples $(\beta(0), \omega(0), b_1(0, 0))$ satisfying the condition:
\begin{eqnarray*}
 (\beta(0), \omega(0))=\left(-\frac{t^2}{4m(2m+1) b_1(0, 0)}, -\frac{t}{m(m-1)b_1(0, 0)} \right),
\end{eqnarray*}
where $t\in \mathbb{R}\setminus\{0\}$ is a nonzero real parameter.
\end{prop}

\begin{proof}

Remind that $m\ge2$ and $n=2m+1$.
Also, we assume that $\sigma_2^0\neq0$.
Then we have:
\begin{eqnarray*}
\phi_1(y)=y_1^{2m+1}\beta(0)-\sigma_2^0y_1^m\omega(0)-\sigma_1^0y_1-\frac{(\sigma_2^0)^2}{4b_1(0, 0)y_1} +y_1b_1(0, 0) \left(y_2 -\frac{\sigma_2^0}{2b_1(0, 0)y_1}\right)^2.
\end{eqnarray*}

It is easy to see that the point $\sigma^0$ is uniquely determined by the critical point $y^0$.

If $y^0$ is a critical point of multiplicity $3$ then  the following system of equations has a solution with respect to $(\beta(0), \omega(0), b_1(0, 0))$, whenever $y_1=y_1^0$, for some $\sigma^0\in \Sigma$ and vise-versa:
\begin{eqnarray*}
(2m+1)y_1^{2m}\beta(0)-m\sigma_2^0\omega(0)y_1^{m-1}-\sigma_1^0+\frac{(\sigma_2^0)^2}{4b_1(0, 0)y_1^2}=0;\\
(2m+1)(2m)y_1^{2m-1}\beta(0)-m(m-1)\sigma_2^0\omega(0)y_1^{m-2}-\frac{(\sigma_2^0)^2}{2b_1(0, 0)y_1^3}=0;\\
(2m+1)(2m)(2m-1)y_1^{2m-2}\beta(0)-m(m-1)(m-2)\sigma_2^0\omega(0)y_1^{m-3}+\frac{3(\sigma_2^0)^2}{2b_1(0, 0)y_1^4}=0.
\end{eqnarray*}

Then  denoting $t=\frac{\sigma_2^0}{(y_1^0)^{m+1}}$ one can obtain:
\begin{equation}\label{m3nb5}
\beta(0)=-\frac{t^2}{4m(2m+1)b_1(0, 0)}, \quad \omega(0)=-\frac{t}{m(m-1)b_1(0, 0)}.
\end{equation}

It is easy to see that the last two equations are equivalent to the condition: \\  $(\beta(0), \omega(0), b_1(0, 0))\in \mathfrak{A}_3$.

\end{proof}

\begin{cor}
If $n=2m+1$ and  $(\beta(0), \omega(0), b_1(0, 0))\not\in \mathfrak{A}_3$ then the relation \eqref{estran} holds true, moreover the result is sharp.
\end{cor}

Now, consider the case when  $(\beta(0), \omega(0), b_1(0, 0))\in \mathfrak{A}_3$.

We consider the example of function satisfying the last condition.
Consider the oscillatory integral with the phase function:
\begin{eqnarray*}
\Phi(x_1, x_2, s_1, s_2):=x_1x_2^2-\frac{x_1^{2m+1}}{4m(2m+1)}-s_1x_1+s_2\frac{x_1^m}{m(m-1)}-s_2x_2.
\end{eqnarray*}
Assuming $s_2>0$ and $|s_1|\lesssim s_2^\frac{2m}{m+1}$, we use change of variables $x_1=s_2^\frac1{m+1}y_1, \, x_2=s_2^\frac{m}{m+1}y_2$. Then we obtain:
\begin{eqnarray*}
\Phi(x_1, x_2, s_1, s_2)=s_2^\frac{2m+1}{m+1}\Phi_1(y_1, y_2,  \sigma_1),
\end{eqnarray*}
where
\begin{eqnarray*}
\Phi_1(y_1, y_2,  \sigma_1)= y_1y_2^2-\frac{y_1^{2m+1}}{4m(2m+1)}-\sigma_1y_1-\frac{y_1^m}{m(m-1)}-y_2, \quad \sigma_1:=\frac{s_1}{s_2^\frac{2m}{m+1}}.
\end{eqnarray*}

Then if $\sigma_1^0:=\frac{(m+1)^2(2m-5)}{4m(m-1)(2m+1)}$, then the function
$\Phi_1(y_1, y_2,  \sigma_1^0)$ has singularity of type $A_3$ at the point $(1, \frac12)$.
For the corresponding maximal function we have the following lower bound:
\begin{eqnarray*}
M_\gamma(s)\gtrsim \frac1{\left|\frac{s_1}{s_2^\frac{2m}{m+1}}-\sigma_1^0  \right|^{\frac{4\gamma}3-1}|s_2|^{\frac{(2m+1)\gamma}{m+1}-1}}
\end{eqnarray*}

It is easy to see that $M_\gamma\not\in L_{loc}^{\frac{3}{4\gamma-3}}(\mathbb{R}^2)$. In particular, $M_1\not\in L^3_{loc}(\mathbb{R}^2)$.

 Note that $\frac{3}{4\gamma-3}>\frac{3n-1}{2\gamma n-n-1}$, whenever $\frac34<\gamma< \frac{3n-3}{3n-2}$.

\end{proof}

\begin{proof}
Now, we prove the Theorem \ref{adaptmain}. Let $\frac{n+1}{2n}<\gamma\le  \frac{3n-3}{3n-2}$ we show that
$M_\gamma \not\in L_{loc}^{\frac{3n-1}{2\gamma n-n-1}}(\mathbb{R}^2)$. Indeed, we have the following lower bound
 \begin{eqnarray*}
 M_\gamma(s)\ge \frac1{\rho(s)^{\gamma-\frac{n+1}{2n}}} \not\in  L_{loc}^{\frac{3n-1}{2\gamma n-n-1}}(\mathbb{R}^2)
 \end{eqnarray*}
 for $2m+1\ge n$. If $n<2m+1$ then we have analogical relation for the case $\frac{n+1}{2n}<\gamma\le  1$.

\end{proof}

\section{Non-linearly adapted case}\label{NLA}

\begin{thm}\label{ladap}
Suppose $\phi$ is a smooth function having $D_{n+1} (3\le n\le \infty)$ type singularity at the origin and satisfying the $R-$ condition.
If $2m+1< n$, then exists a neighborhood $V\times U\subset \mathbb{R}^2\times \mathbb{R}^2$ of the origin such that for any $a\in C_0^\infty(V\times U)$ the following inclusion:
\begin{equation}\label{estm1}
M_1\in L_{loc}^{3+\frac1m-0}(\mathbb{R}^2)
\end{equation}
holds true.
\end{thm}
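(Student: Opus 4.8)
The plan is to prove the $\gamma=1$ estimate \eqref{estm1} directly; the full range $\tfrac{n+1}{2n}<\gamma\le1$ of Theorem \ref{mgambaho} then follows by interpolating with Duistermaat's uniform bound $|I(\lambda,s)|\lesssim|\lambda|^{-(n+1)/(2n)}$ \cite{duistermaat}, exactly as in Section \ref{LA}. By Proposition \ref{normform} I assume $\phi(x)=b(x)(x_2-\psi(x_1))^2+b_0(x_1)$ with $\psi(x_1)=x_1^m\omega(x_1)$, $b_0(x_1)=x_1^n\beta(x_1)$ ($b_0\equiv0$ if $n=\infty$, by the $R$-condition), $\omega(0),\beta(0)\ne0$, $b(0,0)=0$, $\partial_1b(0,0)\ne0$, $\partial_2b(0,0)=0$. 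First I make the \emph{non-linear} change of variables $y_1=x_1$, $y_2=x_2-\psi(x_1)$, which straightens the principal root curve of $\Hess\phi$ and turns $\Phi(x,s):=\phi(x)-s_1x_1-s_2x_2$ into
\begin{equation*}
\Phi=\tilde b(y)y_2^2+y_1^n\tilde\beta(y_1)-s_1y_1-s_2y_2-s_2y_1^m\tilde\omega(y_1),\qquad \tilde b(0,0)=0,\ \partial_1\tilde b(0,0)\ne0 .
\end{equation*}
These coordinates are adapted to $\phi$ (the $(1/n,(n-1)/(2n))$-principal part is a $D_{n+1}$ normal form), but the straightening costs the extra linear-in-$s_2$ term $-s_2y_1^m\tilde\omega$, which is precisely the obstruction to the original coordinates being linearly adapted.

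After disposing of the ``easy'' dyadic pieces in $(y_1,y_2)$ --- where $|y_2|$ is bounded below one integrates out $y_2$ by stationary phase and is left with a non-degenerate one-dimensional problem in $y_1$; where the phase has no critical point one integrates by parts --- the term governing the exponent is localized near the origin. There I apply the scaling $x=\delta_\rho(y):=(\rho^{1/(2m+1)}y_1,\rho^{m/(2m+1)}y_2)$ with $\rho=\rho(s):=|s_1|^{(2m+1)/(2m)}+|s_2|^{(2m+1)/(m+1)}$. Because $2m+1<n$, the term $y_1^n\tilde\beta$ scales like $\rho^{n/(2m+1)}=o(\rho)$ and drops out, and with $\mu:=\rho(s)\lambda$, $\sigma:=\delta_{1/\rho(s)}(s)\in\Sigma:=\{\rho(\cdot)=1\}$ one gets $\lambda|I(\lambda,s)|=\rho(s)^{-m/(2m+1)}\,\mu|J(\mu,\sigma)|$, where $J(\mu,\sigma)=\int e^{i\mu\Phi_1(y,\sigma)}a_0(\delta_\rho(y),\sigma)\,dy$ and $\Phi_1$ is a small smooth perturbation of the model phase
\begin{equation*}
\phi_1(y)=\partial_1\tilde b(0,0)\,y_1y_2^2-\sigma_2\omega(0)y_1^m-\sigma_1y_1-\sigma_2y_2 .
\end{equation*}
Hence $M_1(s)\le\rho(s)^{-m/(2m+1)}\tilde M(\sigma(s))$ with $\tilde M(\sigma):=\sup_{\mu>0}\mu|J(\mu,\sigma)|$, and it remains to control $\tilde M$ uniformly on the compact set $\Sigma$.

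The key point is that $\Phi_1(\cdot,\sigma)$ has, for every $\sigma\in\Sigma$, at worst an $A_2$ (Airy) critical point. Completing the square in $y_2$ and integrating it out reduces $\phi_1$ to a one-dimensional phase whose third derivative never vanishes at a double critical point when $\sigma_2\ne0$ (this is where the strict inequality $2m+1<n$ enters: at $n=2m+1$ the surviving term $\tilde\beta(0)y_1^{2m+1}$ would permit an $A_3$ point, cf.\ Section \ref{excep}), while for $\sigma_2=0$ the critical points $(0,\pm(\sigma_1/\partial_1\tilde b(0,0))^{1/2})$ are non-degenerate. Using the versal deformation of the $A_2$ singularity together with $|\mathrm{Ai}|\lesssim(1+|\cdot|)^{-1/4}$ gives $\mu|J(\mu,\sigma)|\lesssim\mu^{1/6}(1+\mu^{2/3}\dist(\sigma,\mathcal{A}))^{-1/4}\lesssim\dist(\sigma,\mathcal{A})^{-1/4}$, where $\mathcal{A}\subset\Sigma$ is the (zero-dimensional) $A_2$-stratum; hence $\tilde M\in L^q(\Sigma)$ for all $q<4$. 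Writing the integral over a small neighbourhood $U$ of the origin in quasi-polar coordinates $s=\delta_\rho(\sigma)$, $ds\sim\rho^{Q-1}d\rho\,d\sigma$ with homogeneous dimension $Q=\tfrac{3m+1}{2m+1}$, one obtains
\begin{equation*}
\int_U M_1(s)^p\,ds\lesssim\Big(\int_0^\varepsilon\rho^{Q-1-pm/(2m+1)}\,d\rho\Big)\Big(\int_\Sigma\tilde M(\sigma)^p\,d\sigma\Big),
\end{equation*}
and the first factor is finite exactly when $p<\tfrac{(2m+1)Q}{m}=3+\tfrac1m$, the second when $p<4$; since $m\ge2$ the binding constraint is $p<3+\tfrac1m$, which is \eqref{estm1}. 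For $n=\infty$ the term $y_1^n\tilde\beta$ is absent from the start and the argument is the same.

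The hard part will be the uniform-in-$\sigma$ estimate for $\tilde M$ near the $A_2$-stratum: one must quantify the transition as $\mu\to\infty$ between the Airy regime and the regime of two separated non-degenerate critical points, tracking the Hessian determinant (comparable to $\dist(\sigma,\mathcal{A})^{1/2}$ there), and rule out or absorb possible tangencies of the $A_2$-caustic to $\Sigma$ that would lower the local integrability exponent below $4$. A further technical point is matching the $\rho$-scaling near the origin with the separate treatment of the region $|y_1|\lesssim\rho^{1/(2m+1)}$ and the axis $y_1=0$, and checking that every ``easy'' dyadic piece contributes to $L^p_{loc}$ for all $p<3+\tfrac1m$.
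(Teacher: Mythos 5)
Your proposal is correct for Theorem \ref{ladap} but takes a genuinely different and cleaner route than the paper. The paper splits $s$-space into two regions according to whether $|s_2|\ll |s_1|^{(n-m)/(n-1)}$ or $|s_1|\lesssim|s_2|^{(n-1)/(n-m)}$, and uses a different quasi-homogeneous scaling in each (weights $(1/(n-1),1/2)$ balancing $y_1y_2^2$, $y_1^n$, $s_1y_1$ in the first region; weights $(1/(n-m),(n-1)/(2(n-m)))$ balancing $y_1y_2^2$, $y_1^n$, $s_2y_1^m$ in the second), with further subdivision in $\sigma_1$ and $\sigma_2$ and a secondary reduction to a $D_{m+1}$-type singularity when $c_1=0$. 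You instead use a single scaling $\delta_\rho$ with weights $(1/(2m+1),m/(2m+1))$ in $y$ and $(2m/(2m+1),(m+1)/(2m+1))$ in $s$, which is exactly the one that makes the family $b_1y_1y_2^2-s_2\omega y_1^m-s_1y_1-s_2y_2$ quasi-homogeneous of degree $1$; the term $y_1^n$ is then strictly subordinate because $n/(2m+1)>1$, so it disappears as a controlled perturbation without ever entering the model phase. That is a real simplification: the binding constraint $p<3+1/m$ emerges in one step from the $\rho$-integral $\int_0^\varepsilon\rho^{Q-1-pm/(2m+1)}d\rho$ with $Q=(3m+1)/(2m+1)$, while the uniform Airy bound on the compact section $\Sigma$ only needs to give $L^{4-0}$, which is not binding for $m\ge2$. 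Your structural claims check out: the $1$D reduced phase $g(y_1)=-\sigma_2^2/(4b_1y_1)-\sigma_2\omega(0)y_1^m-\sigma_1y_1$ cannot satisfy $g'=g''=g'''=0$ unless $m=-1$, and the $A_2$-locus in $\sigma$-space is itself a union of $\delta_r$-orbits, so it meets $\Sigma$ transversally and $\mathcal{A}$ is finite — there are no tangencies to worry about. The $\dist(\sigma,\mathcal{A})^{-1/4}$ bound for $\tilde M$ plays the role of the paper's function $\Psi\in L^{4-0}$ taken from \cite{akr}, so the two approaches ultimately rest on the same Airy-type input, but yours isolates it more transparently.

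One incorrect side remark: the full-$\gamma$ statement of Theorem \ref{mgambaho} does \emph{not} follow from the $\gamma=1$ case by interpolating with Duistermaat's uniform bound, contrary to what you claim at the outset. Pointwise interpolation of $M_1\in L^{(3m+1)/m-0}$ with $M_{(n+1)/(2n)}\in L^\infty$ gives $M_\gamma\in L^{q(\gamma)-0}$ with
\begin{equation*}
q(\gamma)=\frac{(3m+1)(n-1)}{m\bigl(2n\gamma-n-1\bigr)},
\end{equation*}
and a short computation shows
\begin{equation*}
\frac{3m+1}{(2m+1)\gamma-m-1}-q(\gamma)\ \text{has the same sign as}\ (n-2m-1)(1-\gamma)>0\quad\text{for}\ \gamma<1,
\end{equation*}
so the interpolated exponent is strictly worse than Theorem \ref{mgambaho}'s for every $\gamma<1$. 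Similarly, $q(\gamma)<\frac{2(2n-m-1)}{2n\gamma-n-1}$ because $2m(2n-m-1)-(3m+1)(n-1)=(m-1)(n-2m-1)>0$. In the linearly adapted case the interpolation is lossless precisely because the exponent there coincides with $q(\gamma)$ after replacing $(3m+1)/m$ by $(3n-1)/(n-1)$, but in the NLA case the exponent of Theorem \ref{mgambaho} carries extra information that the pointwise bound at $\gamma=1$ alone cannot recover; the $\gamma$-dependent subcase analysis in the paper's Section \ref{NLA} is needed. This does not affect your proof of Theorem \ref{ladap} itself.
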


\begin{proof}
Consider the integral
\begin{eqnarray*}
I(\lambda, s):=\int_{\mathbb{R}^2} a(x,s)e^{i\lambda \Phi(x,s)}dx,
\end{eqnarray*}
where
\begin{eqnarray*}
\Phi(x,s):=b(x_1, x_2)(x_2-x_1^m\omega(x_1))^2+x_1^n\beta(x_1)-s_1x_1-s_2x_2,
\end{eqnarray*}
with smooth functions $b, \omega, \beta$ satisfying the conditions: $\omega(0)\neq0, \beta(0)\neq0$. So, we assume that $n<\infty$. The $D_\infty$ case will be considered later. Actually it is an  easier case.

The function $b$ is smooth and it satisfies the conditions:
\begin{eqnarray*}
b(0, 0)=0, \quad \partial_1b(0, 0)\neq0, \quad  \partial_2b(0, 0)=0.
\end{eqnarray*}

If $|s|\gtrsim1$ then the phase function has no any critical point on the support of the amplitude function, provided the amplitude function is concentrated in a sufficiently small neighborhood of the origin.  Then we can use integration by parts  arguments and obtain the estimate for the integral $I(\lambda, s)$:
\begin{eqnarray*}
|I(\lambda, s)|\lesssim \frac1{|\lambda s|},
\end{eqnarray*}
 which is better than what we wanted.

 Thus, it is enough to consider the case $|s|<<1$. Let $U\subset \mathbb{R}^2$ be a sufficiently small neighborhood of the origin and $s\in U$.

First, we use change of variables $x_2-x_1^m\omega(x_1)\to x_2, \, x_1\to x_1$.

Then we get:
\begin{eqnarray*}
\Phi(x,s):=b(x_1, x_2+x_1^m\omega(x_1))x_2^2+x_1^n\beta(x_1)-s_1x_1-s_2x_2-s_2x_1^m\omega(x_1).
\end{eqnarray*}
Note that the function $b(x_1, x_2+x_1^m\omega(x_1))$ can be written as:
\begin{eqnarray*}
b(x_1, x_2+x_1^m\omega(x_1))=x_1b_1(x_1, x_2)+x_2^2b_2(x_2),
\end{eqnarray*}
where $b_1, b_2$ are smooth functions, moreover $b_ 1(0, 0)\neq0$. Further, without loss of generality we may assume that $b_1(0, 0)=1$.

We consider the case $2m+2\le n$. Since $m\ge2$ we have $6\le  2m+2\le n$. So, we have $n\ge 6$.

Now, we move to the proof of the Theorem \ref{ladap}.

First, consider the case $\frac{|s_2|}{|s_1|^\frac{n-m}{n-1}}<<1$.

Note that, if  in addition   $|\la||s_1|^\frac{n}{n-1}\lesssim1$ then we can use the uniform estimate obtained by J.J. Duistermaate \cite{duistermaat} and get:
\begin{eqnarray*}
|I(\la, s)|\lesssim \frac1{|\la|^{\frac12+\frac1{2n}}} \lesssim\frac1{|\la|^{\frac12+\frac1{2n}}(|\la||s_1|^\frac{n}{n-1})^{\frac12-\frac1{2n}}}=\frac1{|\lambda||s_1|^\frac12}.
\end{eqnarray*}
It is easy to see that the following inclusion holds:
\begin{eqnarray*}
\frac{\chi_{|s_2|\le |s_1|^\frac{n-m}{n-1}}(s)}{|s_1|^\frac12}\in L^{3+\frac{n-2m+1}{n-1}-0}(U),
\end{eqnarray*}
where $U$ is a small neighborhood of the origin of $\mathbb{R}^2$.

Note that the inequality $\frac1m<\frac{n-2m+1}{n-1}$  is equivalent to $2m+1<n$. Thus, we have
\begin{eqnarray*}
 L^{3+\frac{n-2m+1}{n-1}-0}(U)\subset L^{3+\frac1m-0}(U).
\end{eqnarray*}

The last inclusion is what we need to prove.

Further, we assume that $|\la||s_1|^\frac{n}{n-1}>>1$.
We claim that the following estimate
\begin{equation}\label{estyul}
|I(\lambda, s)|\lesssim\frac1{|\lambda||s_1|^\frac12}
\end{equation}
holds, provided $|s_2|<<|s_1|^\frac{n-1}{n-m}$.

In this case we use change of variables $x_1=|s_1|^\frac1{n-1}y_1, \, x_2=|s_1|^\frac12 y_2$ and obtain:
 \begin{eqnarray*}
 I(\lambda, s)=|s_1|^\frac{n+1}{2(n-1)}\int e^{i\lambda |s_1|^\frac{n}{n-1}\Phi_1(y, s)}a(|s_1|^\frac1{n-1}y_1, \, |s_1|^\frac12 y_2, s)dy,
 \end{eqnarray*}
where
\begin{eqnarray*}
\Phi_1(y, s):=(y_1b_1(|s_1|^\frac1{n-1}y_1, |s_1|^\frac12y_2)+  |s_1|^\frac{n-2}{n-1} y_2^2(b_2( |s_1|^\frac12y_2))y_2^2+ y_1^n\beta(|s_1|^\frac1{n-1}y_1)- \\ -\frac{s_2}{|s_1|^\frac{n-m}{n-1}}y_1^m\omega(|s_1|^\frac1{n-1}y_1))-sgn(s_1)y_1-\frac{s_2}{|s_1|^\frac{n+1}{2(n-1)}}y_2.
\end{eqnarray*}
Note that the inequality  $\frac{n-m}{n-1}>\frac{n+1}{2(n-1)}$ holds true under the condition $n>2m+1$. Hence we have $\frac{|s_2|}{|s_1|^\frac{n+1}{2(n-1)}}<<1$ in the considered case.

We
introduce the weights $\kappa_1=\frac1{n-1}, \, \kappa_2=\frac12$ and the corresponding dilation $\delta_r(x):=(r^{\kappa_1}x_1, r^{\kappa_2}x_2
)$ with a positive number $r$.

In order to estimate the integral $ I(\lambda, s)$
take a smooth non-negative function  $\chi_0$  such that
$$
\chi_0(x)=
 \begin{cases}
 1, \, \mbox{for} \quad |x|\leq 1\\
0, \,\mbox{for}\quad |\delta_{2^{-1}}(x)|>1.
\end{cases}
$$

We write the integral  $I$ as a sum of the two integrals by using the function $\chi_0$:
 \begin{eqnarray*}\nonumber
 I(\lambda, s)=|s_1|^{\frac{n+1}{2(n-1)}}\int e^{i\lambda |s_1|^{\frac{n}{n-1}}\Phi_1(y, s)}
 a(|s_1|^{\frac{1}{n-1}}y_1,  |s_1|^\frac12y_2, s)\chi_0\left(\delta_{2^{-N}}(y)\right)dy +\\  |s_1|^{\frac{n+1}{2(n-1)}}\int e^{i\lambda |s_1|^{\frac{n}{n-1}}\Phi_1(y, s)}
 a(|s_1|^{\frac{1}{n-1}}y_1,  |s_1|^\frac12y_2, s)\left(1-\chi_0\left(\delta_{2^{-N}}(y)\right)\right)dy=:I_1+I_2,
  \end{eqnarray*}
where $N$ is a sufficiently large fixed positive integer number to be defined later.

We claim that there exists a number $N$ such that the following estimate
\begin{eqnarray*}\nonumber
 |I_2(\lambda, s)|\lesssim \frac1{|\lambda||s_1|^\frac12}
\end{eqnarray*}
holds true.

Indeed, define $\chi_k(y)=\chi_0(\delta_{2^{-{k-1}}}(y))-\chi_0(\delta_{2^{-k}}(y))$ (where $k\in \mathbb{N}$) then
\begin{eqnarray*}
\chi_0(\delta_{2^{-N}}(y)+ \sum_{k=N+1}^\infty  \chi_k(y) =1
\end{eqnarray*}
is the  partition of unity. By using the partition of unity we write:
\begin{eqnarray*}\nonumber
 I_2(\lambda, s)=\sum_{k=N+1}^{N_0} I_2^k(\lambda, s),
\end{eqnarray*}
where
\begin{eqnarray*}\nonumber
  I_2^k(\lambda, s):=|s_1|^{\frac{n+1}{2(n-1)}}\int e^{i\lambda |s_1|^{\frac{n}{n-1}}\Phi_1(y, s)}
 a(|s_1|^{\frac{1}{n-1}}y_1,  |s_1|^\frac12y_2, s)\left(1-\chi_0\left(\delta_{2^{-N}}(y)\right)\right)\chi_k(y) dy
\end{eqnarray*}
and $N_0$ is a positive integer number satisfying the condition $2^{N_0}|s_1|<<1$.

Let's consider the integral $I_2^k(\lambda, s)$. For this reason we use change of variables given by the nonhomogeneous  scaling $z=\delta_{2^{-k}}(y)$ and obtain:
\begin{eqnarray*}\nonumber
  I_2^k(\lambda, s):=|2^ks_1|^{\frac{n+1}{2(n-1)}}\int e^{i\lambda |2^ks_1|^{\frac{n}{n-1}}\Phi_1^k(z, s)}
 a(|2^ks_1|^{\frac{1}{n-1}}z_1,  |2^ks_1|^\frac12z_2, s)\chi_1(z) dz,
\end{eqnarray*}
where
\begin{eqnarray*}
\Phi_1^k(z, s):=(z_1b_1(|2^ks_1|^\frac1{n-1}z_1, |2^ks_1|^\frac12z_2))+ |2^ks_1|^\frac{n-2}{n-1}z_2^2(b_2( |2^ks_1|^\frac12z_2))z_2^2+\\ z_1^n\beta(|2^ks_1|^\frac1{n-1}z_1) -\frac{s_2}{|2^ks_1|^\frac{n-m}{n-1}}z_1^m\omega(|2^ks_1|^\frac1{n-1}z_1))-\frac{sign(s_1)}{2^k}z_1-\frac{s_2}{|2^ks_1|^\frac{n+1}{2(n-1)}}z_2.
\end{eqnarray*}

The function $\Phi_1^k$ can be considered as a small perturbation of the function $z_1z_2^2\pm z_1^n$ on the support of the function $\chi_1$.
The function has no critical point on the support of the amplitude function. Because the last function $z_1z_2^2\pm z_1^n$ has a unique critical point at zero and support of the amplitude function does not contain the origin.

 Therefore we can use integration by parts formula and obtain:
\begin{eqnarray*}\nonumber
 |I_1^k(\lambda, s)|\lesssim \frac1{|\lambda| |2^ks_1|^\frac12}.
\end{eqnarray*}
Consequently, we have
\begin{eqnarray*}\nonumber
 |I_2(\lambda, s)|\le \sum_{k=N+1}^{N_0}|I_1^k(\lambda, s)\lesssim \frac1{|\lambda||s_1|^\frac12}.
\end{eqnarray*}

Now, we consider estimate for the integral $I_1$.
It is easy to see that the function $\Phi_1^k(z, s)$ can be considered as a small perturbation of the function   $z_1z_2^2+ z_1^n\pm z_1$. The last function  has at worst $A_1$  type singularities e.g. non-degenerate critical points. Hence, we have the estimate:
\begin{eqnarray*}
\chi_{\{|s_2|<< |s_1|^\frac{n-m}{n-1}\}}(s)|I_1(\lambda, s)|\le \frac{\chi_{\{|s_2|<< |s_1|^\frac{n-m}{n-1}\}}(s)}{|\lambda| |s_1|^\frac12}.
\end{eqnarray*}

Thus, we have a required bound \eqref{estyul} for the considered case. Consequently,
\begin{eqnarray*}
 \frac{ \chi_{\{|s_2|<< |s_1|^\frac{n-m}{n-1}\}}(s)}{|s_1|^\frac12}\in   L^{\frac{2(2n-m-1)}{n-1}-0}(U)\subset L^{3+\frac1m-0}(U),
\end{eqnarray*}
because $\frac{2(2n-m-1)}{n-1}>3+\frac1m$ under the condition $n>2m+1$.

Finally, we consider the case $\frac{|s_1|}{|s_2|^\frac{n-1}{n-m}}\lesssim1$.

As before we assume that $|\la||s_2|^\frac{n}{n-m}>>1$ otherwise we have a required bound by using the classical Duistermaate estimate \cite{duistermaat}.

Let's use change of variables given by $x_1=|s_2|^\frac1{n-m} y_1, \, x_2=|s_2|^\frac{n-1}{2(n-m)} y_2$. Then we get:
\begin{eqnarray*}
\Phi(|s_2|^\frac1{n-m} y_1, |s_2|^\frac{n-1}{2(n-m)} y_2, s)=|s_2|^\frac{n}{n-m}\Phi_2(y_1, y_2, s),
\end{eqnarray*}
where
\begin{eqnarray*}
\Phi_2(y_1, y_2, s):=(y_1b_1(|s_2|^\frac1{n-m} y_1, |s_2|^\frac{n-1}{2(n-m)} y_2)+|s_2|^\frac{n-2}{n-m}y_2^2b_2(|s_2|^\frac{n-1}{2(n-m)} y_2))y_2^2+\\
y_1^n\beta(|s_2|^\frac1{n-m} y_1)-y_1^m\omega(|s_2|^\frac1{n-m} y_1)-\frac{s_1}{|s_2|^\frac{n-1}{n-m}}y_1-sign(s_2)|s_2|^\frac{n-(2m+1)}{2(n-m)}y_2.
\end{eqnarray*}

Note that the set of critical points is bounded. Therefore, as in the previous case,  it is enough consider estimate for integral over a bounded set.  Thus, we may assume that $|y|\lesssim1$ and $\Phi_2(y_1, y_2, s)$ can be considered as a small perturbation of the following function:
\begin{eqnarray*}
\phi_2(y_1, y_2):=y_1y_2^2+y_1^n\beta(0)-y_1^m\omega(0)-c_1y_1.
\end{eqnarray*}
Because $|c_1|\lesssim1$.

Suppose $(y_1^0, y_2^0)$ is a critical point
of the function $\phi_2$ otherwise e.g. if it is not a critical point we can use integration by parts arguments and obtain a required   estimate .

{\bf Subcase 1.} Assume $c_1\neq0$.  Then the function $\phi_2$ has at worst $A_2$ type singularities.
Indeed, assume $y_1^0\neq0$ then the equation $\partial_2\phi_2(y_1^0, y_2^0)=0$ yields that $y_2^0=0$.
Then by using change of variables $z_1=y_1, z_2=\sqrt{|y_1|}y_2$ we see that the function $\phi_2$ can be written as:
 \begin{eqnarray*}
\phi_3(z):=z_2^2sign(z_1^0) +z_1^n\beta(0)-z_1^m\omega(0)-c_1z_1.
\end{eqnarray*}
The last function $\phi_3$ has at worst $A_2$ e.g. Airy type singularities (see \cite{IMacta} for more detailed discussion).

Suppose $z^0$ is a critical point of the function $\phi_3$ with  $z_1^0=0$. Then we have $b_1(0, 0)z_2^2-c_1=0$. Consequently $z_2^0\neq0$. Thus $ Hess \phi_2(z_1^0, z_2^0)=-4b_1(0, 0)^2(z_2^0)^2\neq0$. Hence we have non-degenerate critical point. Thus, if $z_1^0\neq0$ then $z^0$ is at worst $A_2$ type singularities.

Let $\delta>0$ is a fixed sufficiently small number. Then there exists a function $\Psi(\xi_1, s_2)$ defined on the set $[-M, M]\setminus (-\delta, \delta)$ such that
$\Psi(\cdot, s_2)\in L^{4-0}([-M, M]\setminus (-\delta, \delta))$ satisfying the following conditions (see \cite{akr}):\\
(i)  for any $1<p<4$ the following integral is bounded by a constant depending only on $p$:
\begin{eqnarray*}
\int_{[-M, M]\setminus (-\delta, \delta)}(\Psi(\xi_1, s_2))^pd\xi_1\lesssim_p 1;
\end{eqnarray*}
The following inequality holds:
\begin{eqnarray*}
\chi_{\delta |s_2|^\frac{n-1}{n-m}\le |s_1|\le M|s_2|^\frac{n-1}{n-m}}(s)|\lambda I(\lambda, s)|\le
\frac{\Psi\left(\frac{s_1}{|s_2|^\frac{n-1}{n-m}}, s_2 \right)}{ |s_2|^\frac{n-1}{n-m}}\in  L^{\frac{2(2n-m-1)}{n-1}-0}(U)\subset L^{3+\frac1m-0}(U).
\end{eqnarray*}

{\bf Subcase 2.} Suppose $c_1=0$ and $(z_1^0, z_2^0)$ is a critical point of the function $\phi_3$.
Assume $z_1^0\neq0$. Then necessarily, $z_2^0=0$ otherwise $\partial_2\phi_2(z_1^0,  z_2^0)\neq0$. Then it is easy to see that
 $\det Hess \phi_3(z_1^0, 0)\neq0$ provided $(z_1^0, 0)$ is a critical point of the function $\phi_3$.
 Then we have a required bound for the integral over a sufficiently small neighborhood of the critical point $(z_1^0, 0)$.

Finally, suppose $z_1^0=0$. Then necessarily, $z_2^0=0$. Otherwise $\partial_1\phi_2(0, z_2^0)\neq0$.

Hence if $c_1=0$ then we have $D_{m+1}$ type singularity provided $m\ge3$ at the origin of $\mathbb{R}^2$.

If $m=2$,  then we have $A_3$ type singularities at the origin of $\mathbb{R}^2$.
Consequently, by using the classical Duistermaat \cite{duistermaat} result we have:
\begin{equation}\label{esttwoyul}
\chi_{|s_1|\lesssim|s_2|^\frac{n-1}{n-m}}(s) |I(\lambda, s)|\lesssim \frac{\chi_{|s_1|\lesssim|s_2|^\frac{n-1}{n-m}}(s)}{|\lambda|^\frac{m+1}{2m} |s_2|^\frac1{2m}}.
\end{equation}

Further, we use notation:
\begin{equation}\label{Notation}
\sigma_1:=\frac{s_1}{|s_2|^\frac{n-1}{n-m}},\quad \sigma_2:=sign(s_2)|s_2|^\frac{n-(2m+1)}{2(n-m)}.
\end{equation}
Note that we are in the situation $|\sigma|<<1$.

 The main idea consists of that if $\sigma\neq0$ then the phase function has at worst $A_2$ type singularities (see Lemma \ref{a2typesin}).
We consider two cases depending on  the parameters $\sigma$.

{\bf Case 1.: $|\sigma_1|/|\sigma_2|^\frac{2(m-1)}{m+1}=|s_1|/|s_2|^\frac{2m}{m+1}<<1$}.

In this case we use change of variables assuming that $\lambda |s_2|^\frac{n}{n-m}|\sigma_2|^\frac{2m}{m+1}>>1$:
\begin{eqnarray*}
y_1=|\sigma_2|^\frac2{m+1} z_1, \quad y_2=|\sigma_2|^\frac{m-1}{m+1} z_2
\end{eqnarray*}
Then we obtain:
\begin{eqnarray*}
\Phi_2(|\sigma_2|^\frac2{m+1} z_1, y_2=|\sigma_2|^\frac{m-1}{m+1}z_2, s):=|\sigma_2|^\frac{2m}{m+1} (z_1b_1(|s_2|^\frac1{n-m} |\sigma_2|^\frac2{m+1} z_1, |s_2|^\frac{n-1}{2(n-m)} |\sigma_2|^\frac{m-1}{m+1} z_2)+\\ |s_2|^\frac{n-2}{n-m}|\sigma_2|^\frac{2m-4}{m-1}z_2^2b_2(|s_2|^\frac{n-1}{2(n-m)} |\sigma_2|^\frac{m-1}{m+1} z_2))z_2^2+
|\sigma_2|^\frac{2(n-m)}{m-1}z_1^n\beta(|s_2|^\frac1{n-m} |\sigma_2|^\frac2{m+1} z_1)-\\ z_1^m\omega(|s_2|^\frac1{n-m} |\sigma_2|^\frac2{m+1} z_1)- \frac{\sigma_1}{|\sigma_2|^\frac{2(m-1)}{m+1}}z_1-sgn(\sigma_2)z_2)=: |\sigma_2|^\frac{2m}{m+1}\Phi_4(z, s, \sigma),
\end{eqnarray*}
where $\Phi_4(z, s, \sigma)$ can be written as:
\begin{eqnarray*}
\Phi_4(z, s, \sigma):=z_1b_1(|s_2|^\frac1{m+1} z_1,  |s_2|^\frac{m}{m+1} z_2)+ |s_2|^\frac{(3m-5)n-4m^2+4m+6}{(n-m)(m-1)}z_2^2b_2(|s_2|^\frac{m}{m+1} z_2))z_2^2+ \\
|s_2|^\frac{2(n-2m-1)}{m-1}z_1^n\beta(|s_2|^\frac1{m+1} z_1)- z_1^m\omega(|s_2|^\frac1{m+1} z_1)- \frac{\sigma_1}{|\sigma_2|^\frac{2(m-1)}{m+1}}z_1-sgn(\sigma_2)z_2).
\end{eqnarray*}

Again we consider the integral over the set $\{|z|\lesssim1\}$. Then the phase function can be considered as a small perturbation of function
 \begin{eqnarray*}
\phi_4(z):=z_1z_2^2b_1(0, 0)-\omega(0)z_1^m-sign(\sigma_2)z_2.
\end{eqnarray*}
Assume $(z_1^0, z_2^0)$ is a critical point of the last function. Then $z_1^0\neq0$ and $z_2^0\neq0$ and $\det Hess\phi_4(z^0)=-2b_1(0, 0)\omega(0)m(m+3)(z_1^0)^{m-1}\neq0$. Hence $z^0$ is a non-degenerate critical point of that function.
Consequently we get:
\begin{eqnarray*}
|\lambda||I|\lesssim \frac{\chi_{\{|s_1|<< |s_2|^\frac{2m}{m+1}\}}(s)}{|s_2|^\frac{m}{m+1}}\in L^{3+\frac1m-0}(U),
\end{eqnarray*}

Now, we consider the more subtle case:\\
{\bf Case 2: $|\sigma_2|/|\sigma_1|^\frac{m+1}{2(m-1)}=\left(\frac{|s_2|}{|s_1|^\frac{m+1}{2m}} \right)^\frac{m}{m-1}\lesssim1$}.

In this case we use change of variables:
\begin{eqnarray*}
y_1=|\sigma_1|^\frac1{m-1} z_1, \quad y_2=|\sigma_1|^\frac12 z_2,
\end{eqnarray*}
where as before we assume that $|s_2|^\frac{n}{n-m}|\sigma_1|^\frac{m}{m-1}>>1$.
Then we obtain:
\begin{eqnarray*}
\Phi_2(|\sigma_1|^\frac1{m-1} z_1, |\sigma_1|^\frac12 z_2, s):=|\sigma_1|^\frac{m}{m-1} (z_1b_1(|s_2|^\frac1{n-m} |\sigma_1|^\frac1{m-1} z_1, |s_2|^\frac{n-1}{2(n-m)} |\sigma_1|^\frac12 z_2)+\\ |s_2|^\frac{n-2}{n-m}|\sigma_1|^\frac{m-2}{m-1}z_2^2b_2(|s_2|^\frac{n-1}{2(n-m)} |\sigma_1|^\frac12 z_2))z_2^2+
|\sigma_1|^\frac{n-m}{m-1}z_1^n\beta(|s_2|^\frac1{n-m} |\sigma_1|^\frac1{m-1} z_1)-\\ z_1^m\omega(|s_2|^\frac1{n-m} |\sigma_1|^\frac1{m-1} z_1)- sgn(s_1)z_1-\frac{\sigma_2}{|\sigma_1|^\frac{m+1}{2(m-1)}}z_2)=: |\sigma_1|^\frac{m}{m-1}\Phi_3(z, s, \sigma),
\end{eqnarray*}
where $\Phi_3(z, s, \sigma)$ can be written as
\begin{eqnarray*}
\Phi_3(z, s, \sigma):=z_1z_2^2b_1\left(\left|\frac{s_1}{s_2}\right|^\frac1{m-1} z_1,  |s_1|^\frac12 z_2\right)+ |s_2|^\frac{n-2}{n-m}|\sigma_1|^\frac{m-2}{m-1}b_2(|s_1|^\frac12 z_2)z_2^4\\ +
|\sigma_1|^\frac{n-m}{m-1}z_1^n\beta\left(\left|\frac{s_1}{s_2}\right|^\frac1{m-1} z_1\right)- z_1^m\omega(\left|\frac{s_1}{s_2}\right|^\frac1{m-1} z_1)- sgn(s_1)z_1-\frac{\sigma_2}{|\sigma_1|^\frac{m+1}{2(m-1)}}z_2.
\end{eqnarray*}
Note that we are in the situation $|s_1|<<|s_2|^\frac{n-1}{n-m}<<|s_2|$, because $m\ge2$.

Consequently, $\Phi_3(z, s, \sigma)$ can be considered as a small perturbation of the function
\begin{eqnarray*}
\phi_3(z):=b_1(0, 0)z_1z_2^2-\omega(0)z_1^m-sgn(s_1)z_1-c_2 z_2.
\end{eqnarray*}
Suppose $c_2=0$. Then it is easy to see that the phase function has the critical point $(z_1^0, \, z_2^0)$ provided  $z_1^0z_2^0=0$.  Note that $(0, z_2^0)$ is not a critical point for any $z_2^0$, under the condition $c_2=0$.

It is easy to see that the point $(z_1^0, 0) (z_1^0\neq0)$ is a non-degenerate critical point. Thus,  the following estimate:
\begin{equation}\label{a2est}
 \chi_{|s_2|<<|s_1|^\frac{m+1}{2m}}(s)|I|\lesssim\frac{\chi_{|s_2|<<|s_1|^\frac{m+1}{2m}}(s)}{|\lambda||s_1|^\frac12}\in L^{3+\frac1m-0}(U),
\end{equation}
holds true.

Now, assume that $c_1=c_1^0\neq0$.  Then there exists a function $\Psi(s_1, \xi_1)$ defined on $U(0, c_1^0):=(-\delta, \delta)\times (c_1^0-\delta, c_1^0+\delta) $,  such that $\Psi(s_1, \cdot)\in L^{4-0}(c_1^0-\delta, c_1^0+\delta)$ and the following inequality:
\begin{eqnarray*}
|\lambda||I|\lesssim \frac{\Psi\left(s_1,  \frac{s_2}{|s_1|^\frac{m+1}{2m}} \right)}{|s_1|^\frac12}\in L^{3+\frac1m-0}(U),
\end{eqnarray*}
holds, since $m\ge2$. Then we can use standard finite covering  arguments of a compact set $[\delta, M]$. Thus we arrive at a proof  of the Theorem  \ref{ladap}  for $n<\infty$.

In the case $n=\infty$ by the $R-$ condition we have $b_0\equiv0$. Then we can repeat all arguments formally taking $n=\infty$ and arrive at a proof of the Theorem \ref{ladap} for the $D_\infty$ case.

Thus, Theorem \ref{ladap} is proved.

\end{proof}

\begin{proof}
Now, we prove the Theorem \ref{mgambaho}.
Since the phase has at worst $A_2$ type singularity we get the following:
\begin{equation}\label{estthreyul}
\chi_{|s_2|\lesssim |s_1|^\frac{m+1}{2m}}(s)|I(\lambda, s)|\lesssim \frac{\chi_{|s_2|\lesssim |s_1|^\frac{m+1}{2m}}(s)}{|\lambda|^\frac56|s_2|^\frac1{6(m-1)}|s_2|^\frac{2m-3}{6(m-1)}}.
\end{equation}

Assume $\frac{n+1}{2n}<\gamma\le1$.
First, consider the case $\frac{|s_2|}{|s_1|^\frac{n-m}{n-1}}<<1$.
Then we obtain:
\begin{eqnarray*}
\chi_{|s_2|<<|s_1|^\frac{n-m}{n-1}}(s)M_\gamma(s)\lesssim \frac{\chi_{|s_2|<<|s_1|^\frac{n-m}{n-1}}(s)}{|s_1|^\frac12}\in L^{\frac{2(2n-m-1)}{2n\gamma-n-1}-0 }(U).
\end{eqnarray*}

Finally, we consider more subtle the case $\frac{|s_1|}{|s_2|^\frac{n-1}{n-m}}\lesssim1$.
First assume $\frac{n+1}{2n}<\gamma\le \frac{m+1}{2m}$.  Then interpolating the classical Duistermaat type estimate and \eqref{esttwoyul} we get:
 \begin{eqnarray*}
\chi_{\{|s_1|\lesssim |s_2|^\frac{n-1}{n-m}\}}(s)I(\lambda, s)|\lesssim
\frac1{|\lambda|^\gamma |s_2|^\frac{2n\gamma -n-1}{2(n-m)}}.
\end{eqnarray*}
Consequently, we obtain:
 \begin{eqnarray*}
\chi_{\{|s_1|\lesssim |s_2|^\frac{2m}{m+1}\}}(s)M_\gamma(s)\lesssim
\frac{\chi_{\{|s_1|\lesssim |s_2|^\frac{2m}{m+1}\}}(s)}{ |s_2|^\frac{2n\gamma -n-1}{2(n-m)}}\in L^{\frac{2(2n-m-1)}{2n\gamma-n-1}-0}(U).
\end{eqnarray*}

Now, we assume $\frac{m+1}{2m}<\gamma\le \frac{m+3}{2(m+1)}$.

{\bf Case 1.} Suppose $0<\delta$ is a fixed sufficiently small positive number will be defined later.
Assume, $\delta |s_2|^\frac{n-1}{n-m}\le |s_1|\lesssim|s_2|^\frac{n-1}{n-m}$. Then there exists a function $\Psi(\cdot, s_2)\in L^{4-0}([\delta, M])$ such that the following estimate:

\begin{eqnarray*}
\chi_{\delta |s_2|^\frac{n-1}{n-m}\le |s_1|\lesssim|s_2|^\frac{n-1}{n-m}}(s)I(\lambda, s)|\le
\frac{\Psi\left(\frac{s_1}{|s_2|^\frac{n-1}{n-m}}, s_2 \right)}{|\lambda||s_2|^\frac{n-1}{2(n-m)}}.
\end{eqnarray*}
holds.
The last estimate yields
\begin{eqnarray*}
\chi_{\delta |s_2|^\frac{n-1}{n-m}\le |s_1|\lesssim|s_2|^\frac{n-1}{n-m}}(s)I(\lambda, s)|\le
\frac{\Psi\left(\frac{s_1}{|s_2|^\frac{n-1}{n-m}}, s_2 \right)^\frac{2n\gamma-(n+1)}{n-1}}{|\lambda|^\gamma |s_2|^\frac{2n\gamma -(n+1)}{2(n-m)}}.
\end{eqnarray*}
Hence
\begin{eqnarray*}
\chi_{\{\delta |s_2|^\frac{n-1}{n-m}\le |s_1|\lesssim|s_2|^\frac{n-1}{n-m}\}}(s)
M_\gamma( s)|\lesssim
 \frac{\chi_{\delta |s_2|^\frac{n-1}{n-m}
\le |s_1|\lesssim|s_2|^\frac{n-1}{n-m}}(s)\Psi\left(\frac{s_1}{|s_2|^\frac{n-1}{n-m}}, s_2 \right)^\frac{2n\gamma-(n+1)}{n-1} }{ |s_2|^{\frac{2n\gamma -(n+1)}{2(n-m)}}}\in\\
L^{\frac{2(2n-m-1)}{2n\gamma-(n+1)}-0}(U),
\end{eqnarray*}
Note that the inequality $m\ge2$ yields
\begin{eqnarray*}
\frac{2n\gamma-(n+1)}{n-1}\frac{2(2n-m-1)}{2n\gamma-(n+1)}<4.
\end{eqnarray*}

Suppose $y_1^0=0$. Then we have $b_1(0, 0)y_2^2-c_1=0$. Consequently $y_2^0\neq0$. Thus $ Hess \phi_2(y_1^0, y_2^0)=-4b_1(0, 0)^2(y_2^0)^2\neq0$. Hence we have non-degenerate critical point. Again we have a required bound.

Now, we use notation \eqref{Notation} for $\sigma$. Note that we are in the situation $|\sigma|<<1$.

{\bf Subcase 1: $|\sigma_1|/|\sigma_2|^\frac{2(m-1)}{m+1}=|s_1|/|s_2|^\frac{2m}{m+1} <<1$} and amplitude function is concentrated in a sufficiently small neighborhood of the origin.
Consequently, assuming  $\frac{m+1}{2m}<\gamma\le \frac{m+3}{2(m+1)}$we get:
\begin{eqnarray*}
\chi_{\{|s_1|<< |s_2|^\frac{2m}{m+1}\}}(s)I(\lambda, s) \lesssim \frac{\chi_{\{|s_1|<< |s_2|^\frac{2m}{m+1}\}}(s)}{|\lambda|^\frac{m+1}{2m}|s_2|^\frac1{2m}+ |\lambda| |s_2|^\frac{m}{m+1}}\lesssim \frac{\chi_{\{|s_1|<< |s_2|^\frac{2m}{m+1}\}}(s)}{|\lambda|^\gamma|s_2|^\frac{(2m+1)\gamma-(m+1)}{m+1}}.
\end{eqnarray*}
Hence, we obtain:
\begin{eqnarray*}
\chi_{\{|s_1|<< |s_2|^\frac{2m}{m+1}\}}(s)M_\gamma (s) \lesssim \frac{\chi_{\{|s_1|<< |s_2|^\frac{2m}{m+1}\}}(s)}{|s_2|^\frac{(2m+1)\gamma-(m+1)}{m+1}}
\in  L^{\frac{2(2n-m-1)}{2n\gamma-n-1}-0}(U).
\end{eqnarray*}

Note that the condition
\begin{eqnarray*}
\frac{3m+1}{(2m+1)\gamma-(m+1)}\ge\frac{2(2n-m-1)}{2n\gamma-n-1}
\end{eqnarray*}
is equivalent to the inequality $\frac{m+3}{2(m+1)}\ge\gamma$.
Now, assume $\frac{m+3}{2(m+1)}\le\gamma\le 1$. Then we have
\begin{eqnarray*}
\chi_{\{|s_1|<< |s_2|^\frac{2m}{m+1}\}}(s)M_\gamma (s) \lesssim \frac{\chi_{\{|s_1|<< |s_2|^\frac{2m}{m+1}\}}(s)}{|s_2|^\frac{(2m+1)\gamma-(m+1)}{m+1}}
\in  L^{\frac{3m+1}{(2m+1)\gamma-(m+1)}-0}(U).
\end{eqnarray*}

{\bf Subcase 2: $|\sigma_2|/|\sigma_1|^\frac{m+1}{2(m-1)}=\left(\frac{|s_2|}{|s_1|^\frac{m+1}{2m}} \right)^\frac{m}{m-1}\lesssim1$}.

Then due to the Lemma \ref{a2typesin} the phase function has at worst singularity of type $A_2$.

Assume $\frac{m+1}{2m}<\gamma\le \frac56$. Then we get:
\begin{eqnarray*}
|I(\lambda, s)|\lesssim \frac1{|\lambda|^\gamma|s_2|^{\frac{1-\gamma}{m-1}}|s_1|^\frac{2m\gamma-(m+1)}{2(m-1)}},
\end{eqnarray*}

Suppose $\frac{m+1}{2m}<\gamma\le\frac{m+3}{2(m+1)}$.
Then
\begin{eqnarray*}
\frac{1-\gamma}{m-1}\frac{2(2n-m-1)}{2n\gamma -n-1}\le1, \quad \frac{2m\gamma-(m+1)}{2(m-1)}\frac{2(2n-m-1)}{2n\gamma -n-1}<1.
\end{eqnarray*}
The straightforward computations show that
\begin{eqnarray*}
\chi_{\{|s_2^\frac{m+1}{2m}|\lesssim |s_1|\}}(\cdot)M_\gamma (\cdot)\in  L^{\frac{2(2n-m-1)}{2n\gamma-n-1}-0}(U).
\end{eqnarray*}

Now, we consider the case $\frac{m+3}{2(m+1)}<\gamma\le \frac56$.
Then $\frac{1-\gamma}{m-1}\cdot\frac{2(2n-m-1)}{2n\gamma-n-1}<1$ because the inequality holds under the condition $\gamma>\frac{m+3}{2(m+1)}$.

The condition $\frac{1-\gamma}{m-1}<\frac{2m\gamma-(m+1)}{2(m-1)}$ is equivalent to the inequality $\frac{m+3}{2(m+1)}<\gamma$.
We are in the situation $|s_2|^\frac{2m}{m+1}<<|s_1| \lesssim |s_2|^\frac{n-1}{n-m}$.

It is easy to see that
\begin{eqnarray*}
\chi_{\{|s_2^\frac{m+1}{2m}|\lesssim |s_1|\}}(\cdot)M_\gamma (\cdot)\in  L^{\frac{3m+1}{(2m+1)\gamma-(m+1)}-0}(U).
\end{eqnarray*}
The condition
\begin{eqnarray*}
\frac{3m+1}{(2m+1)\gamma-(m+1)}<\frac{2(2n-m-1)}{2n\gamma-n-1}
\end{eqnarray*}
is equivalent to the inequality $\frac{m+3}{2(m+1)}<\gamma$. The last inequality is equivalent to
 \begin{eqnarray*}
\frac{2m\gamma-(m+1)}{2(m-1)}\frac{3m+1}{(2m+1)\gamma-(m+1)}>1.
\end{eqnarray*}

Now, assume that $\frac56<\gamma\le 1$.
Note that if  $|s_2|^\frac{m+1}{2m}<< |s_1|$ then the phase function has only non-degenerate critical points. Consequently, we obtain:
\begin{eqnarray*}
\chi_{\{|s_2^\frac{m+1}{2m}|<< |s_1|\}}(\cdot)M_\gamma (\cdot)\in  L^{\frac{3m+1}{(2m+1)\gamma-(m+1)}-0}(U),
\end{eqnarray*}
for $\frac56<\gamma\le1$.

Finally, assume $ |s_2|^\frac{m+1}{2m}\gtrsim |s_1|$. Then $|s_2|^\frac{2m}{m+1}\thicksim|s_1|$ in the considered case.

Then due to the Lemma \ref{a2typesin} there exists a function $\Psi\in L^{4-0}$ such that the following estimate
\begin{eqnarray*}
\chi_{|s_2|^\frac{2m}{m+1}\thicksim|s_1|}(s) |I(\lambda, s)|\lesssim \frac{\Psi\left(\sigma_1,  \frac{\sigma_2}{|\sigma_1|^\frac{m+1}{2(m-1)}}\right)}{|\lambda||s_1|^\frac12},
\end{eqnarray*}

 \begin{eqnarray*}
|I(\lambda, s)|\lesssim \frac1{|\lambda|^\frac{m+1}{2m}|s_2|^\frac1{2m}+ \frac{ |\lambda||s_1|^\frac12}{\Psi}},
\end{eqnarray*}
for $\frac{\sigma_2}{|\sigma_1|^\frac{m+1}{2(m-1)}}\in (c_1-\delta, c_1+\delta)$, with a sufficiently small fixed positive number $\delta$.
Consequently,
\begin{eqnarray*}
|I(\lambda, s)|\lesssim \frac{\Psi^{\frac{2m\gamma-(m+1)}{m-1}}}{|\lambda|^\gamma|s_2|^\frac{2m(1-\gamma)}{m-1}|s_1|^\frac{2m\gamma-(m+1)}{2(m-1)}}.
\end{eqnarray*}
Hence
\begin{eqnarray*}
\chi_{|s_2^\frac{m+1}{2m}|\backsim |s_1| }(\cdot)M_\gamma(\cdot)\in L^{\frac{3m+1}{(2m+1)\gamma-(m+1)}-0}(U).
\end{eqnarray*}

Sharpness of the bound is proved as in the linearly adapted case.

\end{proof}

{\bf Acknowledgment:}   Part of the results were announced in the  "International Workshop on Operator Theory and Harmonic Analysis 2023",
Sochi,   December 19, 2023.
The second author express his gratitude to organizers of the conference  for the warm hospitality.

%



\begin{thebibliography}{99999999}

\bibitem{arhipov}
 Arhipov, G.\,I.,  Karacuba, A.\,A.,   {\v{C}}ubarikov, V.\,N.,
\newblock Trigonometric integrals.
\newblock {\em Izv. Akad. Nauk SSSR Ser. Mat.}, 43 (1979), 971--1003, 1197 (Russian);
English translation in {\em Math. USSR-Izv.}, 15 (1980), 211--239.

\bibitem{akr} Akramova D. I.  and Ikromov I. A., Randol Maximal Functions and the Integrability
of the Fourier Transform of Measures, Mathematical Notes, 2021, Vol. 109, No. 5, pp. 661-678.

\bibitem{agv1}{
V. I. Arnol'd, S. M. Gusein-zade, and A. N. Varchenko, "Singularities of differentiable mappings,"  in
Classification of Critical Points of Caustics and Wavefronts 1985, Vol. 1, 1985, Birkh\"auser. Boston- Basel- Stuttgard.}

\bibitem{agv2}
V. I. Arnol'd, S. M. Gusein-zade, and A. N. Varchenko, "Singularities of differentiable mappings,"
Volume II, Monodromy and Asymptotics of Integrals, 1988, Birkh\"user,
Boston- Basel- Berlin.


\bibitem{Atiyah} Atiyah, M. F.  Resolution of singularities and division of distributions. Comm. Pure Appl. Math. 1970,
23, 145-150.

\bibitem{BernsteinG}
 Bernstein I.N. and Gel'fand, S.I.
The function $P^\lambda$ is meromorphic. Functional anal. and applications  1969,3:1,84-86.

\bibitem{BIM22}  Buschenhenke S. , Dendrinos S. , Ikromov I.A., M\"uller D.,  Estimates for maximal functions
associated to hypersurfaces in $\mathbb{R}^3$ with height $h < 2$ : Part I, Transactions of the American
Mathematical Society, Volume 372, 2019, No. 2, pages 1363-1406.

 \bibitem{duistermaat}
 Duistermaat, J.\,J.,
\newblock Oscillatory integrals, {L}agrange immersions and unfolding of
  singularities.
\newblock {\em Comm. Pure Appl. Math.}, 27 (1974), 207--281.

\bibitem{fedoryuk} Fedoryuk M.V.  The method of steepest decent. ---M.: Nauka,
1977. 368 p. [in russian].


\bibitem{ikr24}   Ikromov I. A. and Ikramova D. I.  On the sharp  $L^p$ estimates
of the Fourier Transform of Measures, Mathematical Notes, 2024, Vol. 115, No. 1, pp. 51-77.

\bibitem{IMacta}
I. A. Ikromov, M. Kempe and D. M\"uller, Estimates for maximal functions and
related problems of harmonic analysis associated to hypersurfaces in $\mathbb{R}^3$, Acta
mathematika, 204(2010), no. 2, 151-271.

\bibitem{IM-adapted}
Ikromov, I.\,A.,  M\"uller, D.,
\newblock On adapted coordinate systems.
\newblock {\em   Trans. Amer. Math. Soc.,} 363 (2011), no. 6, 2821--2848.


\bibitem{IM-uniform}
Ikromov, I.\,A.,  M\"uller, D.,
\newblock Uniform estimates for the Fourier transform of surface carried measures in  $\bR^3$ and an application to Fourier restriction.
\newblock {\em    I. Fourier Anal. Appl.,} 17 (2011), no. 6, 1292--1332.




\bibitem{IMmon}
Ikromov, I.\,A.,  M\"uller, D.,
\newblock  Fourier restriction for hypersurfaces in three dimensions and Newton polyhedra;
  {\sl Annals of Mathematics Studies 194}, Princeton University Press, Princeton and Oxford 2016; 260 pp.

\bibitem{karpushkin}
 Karpushkin, V.\,N.,
\newblock A theorem on uniform estimates for oscillatory integrals with a phase
  depending on two variables.
  \newblock {\em Trudy Sem. Petrovsk.} 10 (1984), 150--169, 238 (Russian);
  English translation in
{\em  I. Soviet Math.}, 35 (1986), 2809--2826.

\bibitem{popov} Popov D. A.,
 Remarks on uniform combined estimates of oscillatory integrals with simple singularities
Izvestiya: Mathematics, 2008, Volume 72, Issue 4, Pages 793-816
DOI: https://doi.org/10.1070/IM2008v072n04ABEH002419.

\bibitem{randol}
B.Randol, "On the asymptotic behavior of the Fourier transform of the indicator function of a convex
set Trans. Amer. Math. Soc. 139,271-278(1970).


\bibitem{sard}
Sard  A., "The measure of the critical values of differentiable maps", Bulletin of the American Mathematical Society, 48 (12)(1942): 883-890, doi:10.1090/S0002-9904-1942-07811-6, MR 0007523, Zbl 0063.06720.


\bibitem{sug98}
Sugimoto M., Estimates for Hyperbolic Equations of Space Dimension 3,
Iournal of Functional  Analysi, 160, 382-407 (1998).


\bibitem{stein-book}
Stein, E.\,M.,
\newblock {Harmonic analysis: Real-variable methods, orthogonality, and
  oscillatory integrals}. {\em Princeton Mathematical Series} 43.
\newblock Princeton University Press, Princeton, NI, 1993.



 \bibitem{varchenko}  Varchenko, A.\,N.,
\newblock Newton polyhedra and estimates of oscillating integrals.
\newblock {\em Funktional Anal. Appl.}, 18 (1976), 175--196.


\end{thebibliography}
\end{document}